\newtheorem{theorem}{Theorem}[section]
\newtheorem{corollary}[theorem]{Corollary}
\newtheorem{example}[theorem]{Example}
\newtheorem{definition}[theorem]{Definition}
\begin{document}

\title{Link Algebra: A new aproach to graph theory}
\author{Alfonso Bustamante V}

\address{ Department of Mathematics, University of Chile, Las Palmeras 3425, Santiago, CHILE}

\email{abustamante@ug.uchile.cl}

\date{17 of March, 2011}

\thanks{During the development of this work, we had the initial support of Jorge Soto Andrade, Luis Arenas, Ricardo Osorio and Daniela Ruiz, which motivated us to move forward. We also received help from Boris Roa,with the idea of graph union and simple link as complete bipartite graph; Natalia Reyes with the proposal of some theorems and corrolays; and Andre Caceres in the correction and translation of the manuscript. Without this help we would not have been able to finish our work.}

\begin{abstract} 
In this paper we develop a structure called Link Algebra, in which we present a Set with two binary operations and an axiom system developed from the study of graph theory and set/antiset theory, sowing main theorems and definitions. Once introduced Link Algebra, we will show the aplication on graph theory, like defining Paths, cycles and stars. Finally, we will se an alternative axiomatizations with Multisets and ordered pairs to algebraicaly define mutli, pseudo and oriented graphs.
\end{abstract}

\maketitle

\section{Introduction}

In mathmatics, algebriac strucutres usually represent the axiomatization of known objects, hide in the dephts of a unsolved problem. The strucutre under consideration in this work, was inspired in a problem aparentley trivial: if we have a graph defined as the usual like the 2-tuple (Berge \cite{Berge})

\begin{center}
$G=(V,E),$
\end{center}

where V is the set of vertexes and E is the set of the unordered pair of vertexes (in a non-oriented graph) of edges, then we assume a graph G with his subgraph G', and we want to find the graph X in the graph G, this is

\begin{center}
$G=G'\cup X.$ 
\end{center}

If we consider only the asumptions on graph theory so far, which is bassiclay based on ZF set theory axiom system, we have no solution to the problem. If we look, instead, at the extension made in the works of Carroll \cite{Carroll}(2009) and Gatica \cite{Gatica}(2010), considering the existence of an antiset B where

\begin{center}
$A \cup B = \emptyset,$
\end{center}

with $B=\tilde{A}$ the solution would be 
$$G \cup \tilde{G'}= X,$$

where naturally emerges the concept of antigraph as a solution for a graph boolean equation system with one variable. This way of working graphs will help to bring simplest solutions to apparently complex problems.

Before we start to define our algebraic structure, we will introduce some preliminar concepts, like graphs, the definition of set union al link, and the basic of set/antiset theory.

\section{Introduction to Link Algebra}

\subsection{Graph Theory}In graph theory, the concepts of vertex and edge are essential: is the source of thousands of objects and forms, but the problem lies in one thing: it is still atached to combinatorial analisis and arithmetic formulations. Is not a propper object with an algebraic background: it is still a mathmatical bricolage of external mathematicas sub-areas. 

For many years, graphs has been consider as a mere exentricity born in the mind of Euler to solve the Koenigsberg bridge problem; in the XX century, Claude Berge made a more formal definition based on the definition by Konig, giving emphasis in the vertex and edges.In this new century, the view of graph has lightly changed: they still remains as 2-tuples only used when the ocation appears; but if we take a closer look of some basic concepts in their theory, we will be able to see the strucure that lies beneath the surface. 

In order to make a better background, lets remind the following concepts: complete bipartite graph, null graph and graph union. 

A complete bipartite graph is ussually defined as

\begin{center}
$K_{n,m}:=(V_1 \cup V_2, E),$
\end{center}

where E is the set of the pairs formed by the conection of the edges v in $V_1$ and w in $V_2$, with n and m the number of vertex contained in the set $V_1$ and $V_2$ respectively. 

The next concept is the null graph, which is defined as
$$N_n:=(V, \{\emptyset\})$$

having vertexes but no edges, where n is the number of edges. 

Finally, we have graph union which is definded as
$$G:=(V \cup V', E \cup E'),$$
where V and V' are the vertexes of two diferent graphs, being E and E' their edges. With this three concepts, we are now able to construct the main operations of Link Algebra.
\subsection{Basic Definitions} From the concepts view of graph theory, we can construct the main definitios of our work.
\begin{definition}
We define the union betwen two graphs G  and G' as
$$G \dashv \vdash G':=(V \cup V', E \cup E')$$
\end{definition}

\begin{definition}
We define the linking betwen two graphs G and G' as
\begin{enumerate}
 \item $N_n \dot{\Lambda} N'_m :=K_{n,m}$, if they are null graphs;

 \item $G \dot{\Lambda} G':=G \dashv \vdash G' \dashv \vdash K_{n,m}$, where $N_n$ and $N'_m$ are the null graphs of G and G' respectively. \footnote{If the reader is interested, use as an example $G=(\{a,b\},\{\{a,b\}\})$ and $G'=(\{c,d\},\{\{c,d\}\})$.}
\end{enumerate}
\end{definition}

Once defined this operations, we will construct the fundamental objects of graph theory: Vertexes and Edges, objects that will allow us to create the laws that will support our algebraic structure.

Back in the definition of null graph, it is not to difficult to define a single vertex, we only need n=1 and the definition is done. In the other case, the edge, we will need the definition of conection betwen two null graphs of one vertex. Hence, the concepts could be defined as follows:

\begin{definition}
We define a \textbf{Vertex} v from the single graph G as
$$v_G:=N_1,$$ where $N_1$ is the null graph of the single graph G.
\end{definition}

\begin{definition}
We define a \textbf{Edge} e from the edge graph G as
$$e_G:=N_1 \dot{\Lambda} N'_1,$$ where $N_1$  and $N'_1$ are the null graphs of the single subgraph H and H' of G respectively.
\end{definition}

Before we start defining a Conective Algebra an its laws, lets make a little introduction to set/antiset theory, for prepare the ground to the propper concept of antivertex and antiedge.

\section{A step back in to the land of sets}

\subsection{Sets and Antisets}When Zermelo and Fraenkel made the axioms of set theory, they consider sets as an unique unit in the set universe. After the appearance of the antiparticle in phisics, many mathematicians started to search a possible analogy in the land of sets. The problem was that the axiomatization made was unable to accept such antielements or antisets. In the 90's some mathematicians started to ask if there was a posibility to make an extension to accept such idea. Then in this century, works like those from Caroll or Gatica made de propper needed axiomatization. In this brief section, we will introduce the basis of set/antiset theory for using in graph theory to define antigraph,  taking the notation from the work of Carroll.

As we defined before, let be A and B sets which
$$A \cup B = \emptyset,$$
where $B= \tilde{A}$, definded as antiset of the set A.

In both works, they start by defining union in to a element view, so, union, in this form is defined for a set A and B which
$$ A \cup B:= \bigcup^{}_{} \{A,B\},$$
where A could be $A=\{a\}$ and $B=\{b\}$ where naturally

\begin{tabular}{llll}
 \\
& $A \cup B$ & $= \{a\} \cup \{b\}$ \\
&            & $= \{a,b\}$, \\

\end{tabular}
\\

which is also called axiom of fusion. If A and B are sets of subsets, the subsets interact as they were just elements, this is, there are no fusion inside fusions. Is also important to mention that is linear and the sign of the set also keeps in their elements, this is

\begin{tabular}{llll}
\\
& (a) $\widetilde{A \cup B}$ & $= \tilde{A} \cup \tilde{B}$ \\
& (b) $ \tilde{A}$           & $= \{\tilde{a}\},$
\end{tabular}
\\

in Carroll's work \cite{Carroll} , this is clearly defined, and derived naturally from the ZF axiomatic extention of the fusion and the union of elements.

with this, it is posible to define the Boolean Algebra with an extention of antielements. For that reason, the new universe set will be defined as a set with both positive and negative properties. The boolean algebraic properties remains as ussually for the operations union and intersection, now extended to antisets. 

\subsection{Consecuences in Graph theory} Using this extension in graphs, gives the naturally the concept of antigraph, this is
$$\tilde{G}:=(\tilde{V}, \tilde{E}),$$
which joined to the graph G (now extended) it result to be
$$ G \cup \tilde{G}:=(\{\emptyset\}, \{\emptyset\}),$$
where $(\{\emptyset\}, \{\emptyset\})$ is definded as the empty graph $\phi_G$.

In the same way, we will define an antivertex and an antiedge as
$$\tilde{v_G}:=\tilde{N_1},$$
$$\tilde{e_G}:=\widetilde{N_1 \dot{\Lambda} N'_1},$$

Other important consecuence, is the solution of the equation $G = G' \cup X$ seen in the introduction as $X =G \cup \tilde{G'}$.
Showed this concepts, we are finally able to define our algebraic structure.

\section{Link Algebra}

\subsection{Definition of Link Algebra} In abstract algebra, in general, the algebraic structure represents the true goal beneath the scrutine of those hide patterns in the behave of a mathematical object. Graphs has been seen as diagrams, even confussed with Graphics, missing their true nature in the world of mathmatics.

The structure, finnaly introduced in this section, has developed from the concepts in graph theory redefinded by set/antiset theory and the new definiton of vertex by using the concept of null graphs.
\begin{definition}
 A \textbf{Link Algebra} is a set G (extended to antigraphs) of vertexes v with the operations $\dashv \vdash$ and $\dot{\Lambda}$ satisfing the following axioms
\begin{enumerate}
 \item Closure
$$v \dashv \vdash w \in G,$$
$$v \dot{\Lambda} w \in G.$$
\item Idempotency
$$v \dashv \vdash v = v,$$ 
$$v \dot{\Lambda} v = v.$$
\item Asociativity \footnote{Exept for $v=w$ and $t=\tilde{v}$, to avoid the same problem in set/antiset theory. If the reader is interested, see Gatica's work.}
$$v \dashv \vdash (w \dashv \vdash t) = (v \dashv \vdash w) \dashv \vdash t,$$ 
$$v \dot{\Lambda} (w \dot{\Lambda} t) = (v \dot{\Lambda} w) \dot{\Lambda} t.$$
\item Identity element

There exist an element $\phi$, that for every elemet of G the equation
$$v \dashv \vdash \phi = v,$$ 
$$v \dot{\Lambda} \phi = v.$$
\item Inverse element

For every element v of G there exist an element $\tilde{v}$ in G that
$$v  \dashv \vdash \tilde{v} = \tilde{v} \dashv \vdash v = \phi,$$ 
$$v \dot{\Lambda} \tilde{v} = \tilde{v} \dot{\Lambda} v = \phi.$$
\item Conmutativity
$$v \dashv \vdash w = w \dashv \vdash v,$$ 
$$v \dot{\Lambda} w = w \dot{\Lambda} v.$$
\item Distributivity
\begin{enumerate}
 \item $v \dot{\Lambda} (w \dashv \vdash t) = (v \dot{\Lambda} w) \dashv \vdash (v \dot{\Lambda} t).$
 \item $(w \dashv \vdash t) \dot{\Lambda} v = (w \dot{\Lambda} v) \dashv \vdash (t \dot{\Lambda} v).$
\end{enumerate}
\item Conectivity

For every equation $\Gamma$ and $\Gamma'$ of G, where
$$\Gamma = V \dashv \vdash E,$$ 
being V and E defined as 
\begin{enumerate}
 \item $V= v_1 \dashv \vdash \ldots \dashv \vdash v_p$
 \item $E= w_1 \dot{\Lambda} t_1 \dashv \vdash \ldots \dashv \vdash w_q \dot{\Lambda} t_q,$
 \item $N_n = V \dashv \vdash w_1 \dashv \vdash t_1 \dashv \vdash \ldots \dashv \vdash w_q \dashv \vdash t_q $
\end{enumerate}

with $N_n$ (where $n=p  + 2q$) and $N'_m$ their vertex equations
$$\Gamma \dot{\Lambda} \Gamma':= \Gamma \dashv \vdash \Gamma' \dashv \vdash N_n \dot{\Lambda} N'_m.$$
\end{enumerate}

\end{definition}
 
Some of the axioms within the structure, bring the open posiblity to construct some important objects in graph theory, such as Paths, Cycles, Stars and complete graphs; but before, we will look at the fundamental properties of the Link Algebra to continue with the consrtuction of main objects such K, P, C and S equations.

\subsection{Theorems of Link Algebra} The following theorems, except few, have proofs using just the basic of group theory. If the reader is interested, could see this proofs in the work of Kaufman and Percigout \cite{Kauffman}, in the sections of groups. 

\begin{theorem}
 if v,w and t $\in$ G and $v \dot{\Lambda} t = w \dot{\Lambda} t$, then $v=w$
\end{theorem}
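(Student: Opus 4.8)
The plan is to treat $(G, \dot{\Lambda})$ as an abelian group and run the usual group-theoretic cancellation argument, which is precisely what is suggested by the remark that these theorems rely only on the basics of group theory. The operation $\dot{\Lambda}$ comes equipped with an identity $\phi$ (Axiom 4), an inverse $\tilde{t}$ for each element (Axiom 5), and associativity (Axiom 3), and these three facts are the only ingredients a cancellation law really needs.

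Concretely, I would start from the hypothesis $v \dot{\Lambda} t = w \dot{\Lambda} t$ and operate on the right by $\tilde{t}$, the $\dot{\Lambda}$-inverse of $t$:
$$(v \dot{\Lambda} t) \dot{\Lambda} \tilde{t} = (w \dot{\Lambda} t) \dot{\Lambda} \tilde{t}.$$
Reassociating each side by Axiom 3 turns this into
$$v \dot{\Lambda} (t \dot{\Lambda} \tilde{t}) = w \dot{\Lambda} (t \dot{\Lambda} \tilde{t}),$$
and then the inverse axiom $t \dot{\Lambda} \tilde{t} = \phi$ collapses both parentheses to $\phi$, giving $v \dot{\Lambda} \phi = w \dot{\Lambda} \phi$. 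A final application of the identity axiom yields $v = w$, which is the claim. Note that commutativity (Axiom 6) is not even needed here, since the cancelled factor $t$ already sits on the right of both products.

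The step I expect to be delicate is the reassociation, because Axiom 3 carries the footnoted exception forbidding the configuration in which the first factor equals the second while the third is the inverse of the first. In the regrouping $(v \dot{\Lambda} t)\dot{\Lambda}\tilde{t} \mapsto v \dot{\Lambda}(t \dot{\Lambda}\tilde{t})$ the excluded pattern is exactly $v = t$ (and symmetrically $w = t$ on the other side), so the argument above is immediate only when $t$ differs from both $v$ and $w$. I would therefore isolate the case $v = t$ and treat it on its own: there the hypothesis reads $v \dot{\Lambda} v = w \dot{\Lambda} v$, which idempotency (Axiom 2) simplifies to $v = w \dot{\Lambda} v$, and one must recover $v = w$ from this relation using idempotency and the inverse of $v$. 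This edge case is where the real subtlety lies, and it is worth verifying carefully that the interplay between idempotency and the inverse axiom in this configuration does not force an unintended collapse; the generic case, by contrast, is entirely routine.
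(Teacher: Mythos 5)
Your generic-case argument---link $\tilde{t}$ on the right, reassociate by Axiom 3, collapse $t \dot{\Lambda} \tilde{t}$ to $\phi$ by Axiom 5, and finish with Axiom 4---is precisely the route the paper itself gestures at in its closing sentence (``the other way, is by using the axiom (3), (5) and (4)''). The proof the paper actually displays is different: it expands $(v \dot{\Lambda} t) \dot{\Lambda} \tilde{t}$ through the connectivity axiom (8), distributes, cancels the $t \dot{\Lambda} \tilde{t}$ term, and works back down to $v \dot{\Lambda} \phi = w \dot{\Lambda} \phi$. On the generic case the two arguments are equivalent in substance, and yours is the shorter and cleaner of the two.

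The difficulty is the edge case you flag and then leave open, because it is not merely delicate---it is fatal. Take $t = v$ and $w = \phi$: then $v \dot{\Lambda} t = v \dot{\Lambda} v = v$ by idempotency (Axiom 2), while $w \dot{\Lambda} t = \phi \dot{\Lambda} v = v \dot{\Lambda} \phi = v$ by commutativity and the identity axiom, so the hypothesis $v \dot{\Lambda} t = w \dot{\Lambda} t$ is satisfied, yet the conclusion $v = w$ would force $v = \phi$. Equivalently, cancellation and idempotency are jointly consistent only in the trivial algebra: from $v \dot{\Lambda} v = v = v \dot{\Lambda} \phi$ the theorem itself would yield $v = \phi$ for every $v$. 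So the ``unintended collapse'' you suspected really does occur, and no amount of care with the associativity footnote rescues the excluded case; $v = t$ cannot be proved because it is false there. Note that this does not put you behind the paper---its displayed proof freely applies distributivity, associativity and the inverse axiom without ever confronting the interaction with Axiom 2, so it silently suffers from the same defect. Your hesitation at this exact point is the most substantive mathematical observation in either argument; to make the statement true one must either drop idempotency (as the paper later does for the arithmetical link algebra) or explicitly exclude the degenerate configurations $t = v$ and $t = w$ from the cancellation law.
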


\begin{proof}
One way to solve the equation $v \dot{\Lambda} t = w \dot{\Lambda} t$ is by linking $\tilde{t}$ and using the axiom (8), this is

\begin{tabular}{llll}
& $v \dot{\Lambda} t$ & $= w \dot{\Lambda} t$ & {} \\
& $(v \dot{\Lambda} t) \dot{\Lambda} \tilde{t}$ & $= (w \dot{\Lambda} t) \dot{\Lambda} \tilde{t}$ \\
& $v \dot{\Lambda} t \dashv \vdash \tilde{t} \dashv \vdash (v \dashv \vdash t) \dot{\Lambda} \tilde{t}$ & $=w \dot{\Lambda} t \dashv \vdash \tilde{t} \dashv \vdash (w \dot{\Lambda} t) \dot{\Lambda} \tilde{t}$ \\
& $v \dot{\Lambda} t \dashv \vdash \tilde{t} \dashv \vdash (v \dot{\Lambda} \tilde{t}) \dashv \vdash (t \dot{\Lambda} \tilde{t})$ & $=w \dot{\Lambda} t \dashv \vdash \tilde{t} \dashv \vdash (w \dot{\Lambda} \tilde{t}) \dashv \vdash (t \dot{\Lambda} \tilde{t})$ \\
& $v \dot{\Lambda} t \dashv \vdash \tilde{t} \dashv \vdash (v \dot{\Lambda} \tilde{t}) \dashv \vdash \phi$ & $=w \dot{\Lambda} t \dashv \vdash \tilde{t} \dashv \vdash (w \dot{\Lambda} \tilde{t}) \dashv \vdash \phi$ \\
& $v \dot{\Lambda} t \dashv \vdash \tilde{t} \dashv \vdash (v \dot{\Lambda} \tilde{t})$ & $=w \dot{\Lambda} t \dashv \vdash \tilde{t} \dashv \vdash (w \dot{\Lambda} \tilde{t})$ \\
& $v \dot{\Lambda} t \dashv \vdash \tilde{t} \dot{\Lambda} (\phi \dashv \vdash v)$ & $=w \dot{\Lambda} t \dashv \vdash \tilde{t} \dot{\Lambda} (\phi \dashv \vdash w)$ \\
& $v \dot{\Lambda} t \dashv \vdash \tilde{t} \dot{\Lambda} v$ & $=w \dot{\Lambda} t \dashv \vdash \tilde{t} \dot{\Lambda} w$ \\
& $v \dot{\Lambda} (t \dashv \vdash \tilde{t})$ & $=w \dot{\Lambda} (t \dashv \vdash \tilde{t})$ \\
& $v \dot{\Lambda} \phi$ & $w \dot{\Lambda} \phi$ \\
& $v$ & $=w$ \\
\end{tabular}
\\
The other way, is by using the axiom (3), (5) and (4).
\end{proof}

\begin{theorem}
 For every v in G, $v \dot{\Lambda} \phi = \phi \dot{\Lambda} v$
\end{theorem}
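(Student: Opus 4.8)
The plan is to recognize the claim as simply the instance of the commutativity axiom in which one of the two arguments is the identity element, and then to record the small amount of extra content that makes it worth stating, namely that both sides coincide with $v$ itself.

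First I would apply axiom (6) to the operation $\dot{\Lambda}$, substituting $w=\phi$ into $v \dot{\Lambda} w = w \dot{\Lambda} v$. This yields $v \dot{\Lambda} \phi = \phi \dot{\Lambda} v$ at once, which already establishes the stated equality. To confirm that $\phi$ is a genuine two-sided identity for $\dot{\Lambda}$, rather than only the right identity guaranteed by axiom (4), I would then chain this with that axiom: since $v \dot{\Lambda} \phi = v$ by axiom (4), the commuted form gives $\phi \dot{\Lambda} v = v \dot{\Lambda} \phi = v$, so $\phi$ also acts on the left. Thus the whole proof is the short display $\phi \dot{\Lambda} v = v \dot{\Lambda} \phi = v$, the first equality from axiom (6) and the second from axiom (4).

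An alternative route, closer in spirit to the group-theoretic derivation behind Theorem 4.1 and avoiding any appeal to commutativity, would imitate the classical fact that a right identity together with right inverses forces a two-sided identity: one would try to compute $\phi \dot{\Lambda} v = (v \dot{\Lambda} \tilde{v}) \dot{\Lambda} v = v \dot{\Lambda} (\tilde{v} \dot{\Lambda} v)$ and reduce using axiom (3). Here, however, I expect the main obstacle to surface, because this manipulation forms the product $v \dot{\Lambda} \tilde{v}$ of an element with its own inverse, which is precisely the configuration that the footnote to axiom (3) excludes from associativity. For that reason the group-theoretic route is delicate, and I would favour the commutativity argument of the previous paragraph, which never pairs an element with its inverse and so stays clear of the excluded case. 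Apart from that bookkeeping about degenerate cases, the theorem presents essentially no difficulty.
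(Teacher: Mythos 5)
Your proposal is correct. The paper in fact prints no proof for this theorem (it is among those the author defers to ``basic group theory'' and the Kaufman--Percigout reference), and your one-line argument --- instantiate the commutativity axiom (6) for $\dot{\Lambda}$ at $w=\phi$, then chain with axiom (4) to get $\phi \dot{\Lambda} v = v \dot{\Lambda} \phi = v$ --- is the natural and fully valid derivation; since commutativity is an axiom of a Link Algebra, the statement is literally an instance of it. Your side remark is also well taken: the purely group-theoretic route through $\phi = v \dot{\Lambda} \tilde{v}$ runs straight into the footnote on axiom (3), which excludes associativity precisely when an element is combined with its own inverse, so the commutativity argument is not merely shorter but actually the safer one in this axiom system.
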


\begin{theorem}
 A link Algebra, has a single neutral element.
\end{theorem}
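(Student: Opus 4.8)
The plan is to run the standard group-theoretic uniqueness-of-identity argument, adapted to the two-operation setting of a Link Algebra. I would begin by supposing that $\phi$ and $\phi'$ are both neutral elements of $G$ in the sense of axiom (4); that is, each is an identity for both operations, so that $v \dashv\vdash \phi = v$, $v \dot{\Lambda} \phi = v$, $v \dashv\vdash \phi' = v$ and $v \dot{\Lambda} \phi' = v$ hold for every $v \in G$. The goal is to show these two hypothetical identities coincide, which is exactly the assertion that the neutral element is unique.

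The key step is to evaluate the single expression $\phi \dashv\vdash \phi'$ in two ways. Reading $\phi'$ as the neutral element and $\phi$ as an arbitrary element $v$ of $G$, axiom (4) gives $\phi \dashv\vdash \phi' = \phi$. Reading instead $\phi$ as the identity acting on $\phi'$, and using commutativity (axiom 6) to place $\phi$ on the side where axiom (4) applies, gives $\phi \dashv\vdash \phi' = \phi' \dashv\vdash \phi = \phi'$. Comparing the two evaluations forces $\phi = \phi'$, which is the desired uniqueness.

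I would then observe that the identical computation goes through verbatim with $\dot{\Lambda}$ in place of $\dashv\vdash$, using the $\dot{\Lambda}$-clauses of axioms (4) and (6). This confirms that no competing identity can be manufactured from the second operation either, so the single element $\phi$ postulated by axiom (4) is genuinely unique across the whole structure; in fact either operation alone already pins it down.

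I do not expect a serious obstacle, since the argument is purely formal and exactly parallels the proof referenced in Kaufman and Percigout \cite{Kauffman}. The one point deserving care is the status of the identity axiom: axiom (4) is phrased only as a right identity, $v \dashv\vdash \phi = v$, so to exploit the identity property of $\phi$ when it sits on the left of $\phi'$ I must explicitly invoke commutativity (axiom 6) to swap the operands. Commutativity supplies precisely the two-sidedness that the bare axiom (4) omits, so once that swap is justified the proof closes cleanly.
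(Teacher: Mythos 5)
Your proof is correct and is essentially the paper's own argument: both evaluate the combination of the two putative identities $\phi$ and $\phi'$ in two ways and invoke commutativity (axiom 6) to conclude $\phi = \phi'$, the only difference being that the paper carries out the computation for $\dot{\Lambda}$ first and you for $\dashv\vdash$ first. Your explicit remark that commutativity is what supplies the two-sidedness missing from the bare statement of axiom (4) is a welcome clarification of a step the paper leaves implicit.
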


\begin{proof}
Lets supose that there is a $\phi'$ that $\phi \dot{\Lambda} \phi' = \phi$ and $\phi' \dot{\Lambda} \phi = \phi',$ occurs.
Using the axiom (6), the consequence is immediate. The same occurs for $\dashv \vdash$.
\end{proof}

\begin{theorem}
 For every v of G, $\tilde{v} \dot{\Lambda} v = v \dot{\Lambda} \tilde{v}$
\end{theorem}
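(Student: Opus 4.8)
The plan is to exploit the fact that the inverse axiom~(5) already records two \emph{separate} equalities, $v \dot{\Lambda} \tilde{v} = \phi$ and $\tilde{v} \dot{\Lambda} v = \phi$. Since both products reduce to the same neutral element, the quickest route is simply to observe that $v \dot{\Lambda} \tilde{v} = \phi = \tilde{v} \dot{\Lambda} v$; alternatively, the identity is an immediate instance of the commutativity axiom~(6) with $w$ taken to be $\tilde{v}$. Either way the statement falls out in one line, so the real content is to show that the same conclusion can be reached in the style of the surrounding theorems, using only associativity together with the one-sided forms of the identity and inverse laws.

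First I would fix a second inverse: let $w$ denote the inverse of $\tilde{v}$, so that $\tilde{v} \dot{\Lambda} w = \phi$ by axiom~(5). Starting from $\tilde{v} \dot{\Lambda} v$ and adjoining the neutral element on the right via axiom~(4), I would write $\tilde{v} \dot{\Lambda} v = (\tilde{v} \dot{\Lambda} v) \dot{\Lambda} \phi$ and then replace this $\phi$ by $\tilde{v} \dot{\Lambda} w$. Next I would regroup with the associativity axiom~(3) to reach $\tilde{v} \dot{\Lambda} (v \dot{\Lambda} \tilde{v}) \dot{\Lambda} w$, collapse the inner factor $v \dot{\Lambda} \tilde{v}$ to $\phi$ by the right inverse law, and simplify $\tilde{v} \dot{\Lambda} \phi \dot{\Lambda} w$ back down to $\tilde{v} \dot{\Lambda} w = \phi$ using axiom~(4) once more. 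Since $\phi = v \dot{\Lambda} \tilde{v}$ as well, chaining these equalities yields $\tilde{v} \dot{\Lambda} v = v \dot{\Lambda} \tilde{v}$. The earlier theorem that $\phi \dot{\Lambda} v = v \dot{\Lambda} \phi$ is a convenient companion here, since it certifies that the insertions and deletions of $\phi$ may be performed on either side without changing the value.

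The main obstacle I anticipate is bookkeeping around the associativity axiom rather than any deep difficulty. Axiom~(3) carries the footnote excluding the self-inverse configuration with $w = v$ and $t = \tilde{v}$, and the regrouping $\tilde{v} \dot{\Lambda} (v \dot{\Lambda} \tilde{v})$ sits suspiciously close to that forbidden pattern, where a factor meets its own antielement inside a grouping. I would therefore need to verify carefully that each reassociation stays within the admissible range of axiom~(3); should any step land on the excluded case, the derivation breaks and I would fall back on the one-line argument from axiom~(5), which sidesteps associativity entirely. Because that direct route is always available, I expect the associativity-based derivation to be worth presenting mainly for uniformity with the other proofs of this subsection.
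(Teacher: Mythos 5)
Your proposal is correct, and the one-line argument is exactly what the paper intends: this theorem is left unproved there precisely because axiom (5) already asserts $v \dot{\Lambda} \tilde{v} = \phi$ and $\tilde{v} \dot{\Lambda} v = \phi$ simultaneously (and axiom (6) gives the identity outright), so $\tilde{v} \dot{\Lambda} v = \phi = v \dot{\Lambda} \tilde{v}$ settles it. Your longer associativity-based derivation adds nothing and, as you yourself observe, the regrouping $\tilde{v} \dot{\Lambda} (v \dot{\Lambda} \tilde{v})$ lands essentially on the configuration excluded by the footnote to axiom (3), so the fallback to the direct argument is not merely a precaution but the route you should actually take.
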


\begin{theorem}\label{theo alg rght}
 if v,w and t $\in$ and $t \dot{\Lambda} v = t \dot{\Lambda} w$, then $v=w$
\end{theorem}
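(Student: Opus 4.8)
The plan is to reduce this left-cancellation statement to the right-cancellation statement already established as the first theorem of this subsection, exploiting the commutativity of $\dot{\Lambda}$ (axiom (6)). First I would rewrite both sides of the hypothesis $t \dot{\Lambda} v = t \dot{\Lambda} w$ using axiom (6), turning them into $v \dot{\Lambda} t = w \dot{\Lambda} t$. This is exactly the hypothesis of the earlier cancellation theorem, whose conclusion $v = w$ then delivers the result immediately. Concretely, the argument is the short chain
$$t \dot{\Lambda} v = t \dot{\Lambda} w \;\Longrightarrow\; v \dot{\Lambda} t = w \dot{\Lambda} t \;\Longrightarrow\; v = w,$$
where the first implication is commutativity applied on each side and the second is the previously proved theorem.

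Should one instead want a self-contained derivation mirroring the computation given for the right-cancellation theorem, the plan would be to link $\tilde{t}$ to both sides, forming $\tilde{t} \dot{\Lambda} (t \dot{\Lambda} v) = \tilde{t} \dot{\Lambda} (t \dot{\Lambda} w)$, and then regroup with associativity (axiom (3)) so that the factor $t \dot{\Lambda} \tilde{t}$ collapses to $\phi$ by the inverse axiom (5), after which the identity axiom (4) removes $\phi$. The distributivity axioms (7) would play the same expanding role they play in the earlier proof, since the Link Algebra product $\dot{\Lambda}$ unfolds into $\dashv \vdash$-sums that must be simplified term by term.

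The main obstacle in the direct route is the footnote restriction attached to associativity (axiom (3)), which fails precisely when the two outer factors coincide and the inner factor is their inverse; I would have to verify that the regrouping of $\tilde{t} \dot{\Lambda} (t \dot{\Lambda} v)$ never falls into that forbidden configuration, or else handle that degenerate case separately. The commutativity route avoids this difficulty altogether, since it appeals only to axiom (6) together with the already-proved right-cancellation theorem, so I expect that to be both the cleaner and the intended argument.
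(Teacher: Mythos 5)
Your first route is correct: commutativity (axiom (6)) turns $t \dot{\Lambda} v = t \dot{\Lambda} w$ into $v \dot{\Lambda} t = w \dot{\Lambda} t$, and the previously proved right-cancellation theorem then gives $v = w$ immediately; this is exactly the intended one-line argument, and the paper itself supplies no proof for this statement (it defers such results to standard group-theoretic reasoning). Your alternative self-contained derivation, and the caution about the footnote restriction on associativity, are reasonable but unnecessary here, since the commutativity reduction already closes the proof.
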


\begin{theorem}
 Every element of a link algebra have a single inverse element.
\end{theorem}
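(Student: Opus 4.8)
The plan is to run the classical group-theoretic uniqueness-of-inverse argument once for each of the two operations, using only axioms (3), (4) and (5). I would fix $v \in G$ and suppose, toward uniqueness, that $v$ admits two inverses $\tilde{v}$ and $u$ under $\dashv \vdash$, so that $v \dashv \vdash \tilde{v} = \phi$ and $u \dashv \vdash v = \phi$. Starting from $u$, I would insert the identity by axiom (4), replace $\phi$ by $v \dashv \vdash \tilde{v}$ using axiom (5), regroup by associativity (3), collapse $u \dashv \vdash v$ back to $\phi$ again by (5), and finish with (4):
$$u = u \dashv \vdash \phi = u \dashv \vdash (v \dashv \vdash \tilde{v}) = (u \dashv \vdash v) \dashv \vdash \tilde{v} = \phi \dashv \vdash \tilde{v} = \tilde{v}.$$
Repeating the identical chain with $\dot{\Lambda}$ in place of $\dashv \vdash$ (using the second line of axioms (3), (4), (5)) shows that the inverse under $\dot{\Lambda}$ is likewise unique. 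Since axiom (5) assigns one and the same element $\tilde{v}$ as the inverse for both operations, this establishes that $v$ has a single inverse.

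The step that needs care, and which I expect to be the main obstacle, is the associativity regrouping $u \dashv \vdash (v \dashv \vdash \tilde{v}) = (u \dashv \vdash v) \dashv \vdash \tilde{v}$, because axiom (3) is stated with the exception $v = w$, $t = \tilde{v}$. Reading off the template ($v \mapsto u$, $w \mapsto v$, $t \mapsto \tilde{v}$), the excluded configuration is exactly $u = v$, the outer two factors coinciding with the third being the antielement of the first. I would therefore dispose of that case in advance: if $u = v$ is an inverse of $v$, then idempotency (2) gives $v = v \dashv \vdash v = v \dashv \vdash u = \phi$, so $v = \phi$, and then $\tilde{v} = \tilde{\phi} = \phi = v = u$ directly (the identity being its own inverse). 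In every remaining case the exception does not apply, so associativity may be used freely and the displayed chain goes through unchanged.

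Everything outside this case split is the routine cancellation already announced before the theorem, so once the degenerate case $u = v$ is separated out, the proof is complete for both operations and the single-inverse claim follows.
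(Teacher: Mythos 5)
Your argument is correct and is exactly the standard group-theoretic cancellation the paper has in mind: it states this theorem without proof, deferring to "the basic of group theory" in the cited references. Your explicit handling of the excluded case of axiom (3) (disposing of $u=v$ via idempotency, which forces $v=\phi$ and then $u=\tilde{v}=\phi$ by axiom (4)) closes a gap the paper silently ignores, so the proposal is, if anything, more careful than the source.
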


\begin{theorem}
 If v $\in$ G, then $\widetilde{\tilde{v}} = v$
\end{theorem}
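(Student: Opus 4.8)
The plan is to reduce the statement to the uniqueness of inverses proved just above, so that no fresh computation is needed. By Axiom (5), the symbol $\tilde{v}$ is by definition an inverse of $v$: it satisfies $v \dot{\Lambda} \tilde{v} = \tilde{v} \dot{\Lambda} v = \phi$ (and likewise for $\dashv \vdash$). The observation I would start from is that this very pair of equations, read in the opposite direction, already says that $v$ plays the role of an inverse of $\tilde{v}$, since $\tilde{v} \dot{\Lambda} v = \phi$. The whole proof then turns on recognizing $v$ as an inverse of $\tilde{v}$ and comparing it with the element that Axiom (5) names $\widetilde{\tilde{v}}$.

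First I would apply Axiom (5) to the element $\tilde{v}$ itself; this produces its inverse $\widetilde{\tilde{v}}$, which by definition satisfies $\tilde{v} \dot{\Lambda} \widetilde{\tilde{v}} = \phi$. Combining this with $\tilde{v} \dot{\Lambda} v = \phi$ from the previous paragraph, both sides equal $\phi$, so $\tilde{v} \dot{\Lambda} \widetilde{\tilde{v}} = \tilde{v} \dot{\Lambda} v$. Next I would invoke the left-cancellation law, Theorem~\ref{theo alg rght}, with the cancelled factor $t = \tilde{v}$; this yields $\widetilde{\tilde{v}} = v$ at once. A parallel route avoiding cancellation altogether is to appeal directly to the theorem stating that every element has a single inverse: both $v$ and $\widetilde{\tilde{v}}$ are inverses of $\tilde{v}$, hence they coincide by uniqueness.

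I do not expect a genuine obstacle here, as this is the familiar ``inverse of the inverse'' fact transported into the Link Algebra setting. The only points deserving a moment of care are, first, that Axiom (5) assigns the \emph{same} element $\tilde{v}$ as inverse for both operations $\dashv \vdash$ and $\dot{\Lambda}$, so the argument runs identically with either operation and the theorem holds in full; and second, that one should confirm the derivation never silently relies on the forbidden instance of associativity flagged in the footnote to Axiom (3) (the case of a term combined with its own antielement). Since the cancellation route uses Theorem~\ref{theo alg rght} as a black box and the uniqueness route uses only the defining equations of Axiom (5), neither path associates the excluded combination outside the hypotheses where it is already controlled, so the argument stays within the safe region and the conclusion $\widetilde{\tilde{v}} = v$ follows cleanly.
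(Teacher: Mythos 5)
Your proposal is correct and follows essentially the same route as the paper: both compare the defining equation of $\widetilde{\tilde{v}}$ from Axiom (5) with the equation $\tilde{v} \,\dot{\Lambda}\, v = \phi$ (the paper writes it with $\dashv \vdash$) and then cancel via Theorem~\ref{theo alg rght}. Your version is if anything slightly cleaner, since Theorem~\ref{theo alg rght} is literally stated for $\dot{\Lambda}$, the operation you cancel, whereas the paper applies it to $\dashv \vdash$; the alternative via uniqueness of inverses is a sound, equivalent shortcut.
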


\begin{proof}
We want to know if there is some $\widetilde{\tilde{v}}$ that
$$\tilde{v} \dashv \vdash \widetilde{\tilde{v}} = \phi$$
 lets take the axiom (5) for $\dashv \vdash$
$$\tilde{v} \dashv \vdash v = \phi,$$
using the theorem \ref{theo alg rght}, we proof that $\widetilde{\tilde{v}} = v$
\end{proof}

\begin{theorem}
 If v and w $\in$ G, exists single x and y of G that $v \dot{\Lambda} x = B$ and $y \dot{\Lambda} v = B$
\end{theorem}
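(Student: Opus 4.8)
The statement is the Link-Algebra analogue of the classical fact that in a group the equations $a\cdot x=b$ and $y\cdot a=b$ are uniquely solvable; here $\dot{\Lambda}$ plays the role of the product, $\tilde{v}$ of the inverse and $\phi$ of the identity. Reading the right-hand side of both equations as $w$ (the element named in the hypothesis), the plan is to exhibit explicit solutions and then appeal to the cancellation theorems for uniqueness.

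For existence I would take $x:=\tilde{v}\dot{\Lambda}w$ and $y:=w\dot{\Lambda}\tilde{v}$. To check the first, regroup by associativity (axiom 3), $v\dot{\Lambda}(\tilde{v}\dot{\Lambda}w)=(v\dot{\Lambda}\tilde{v})\dot{\Lambda}w$; the inverse axiom (5) gives $v\dot{\Lambda}\tilde{v}=\phi$, and the identity axiom (4) together with the earlier theorem $v\dot{\Lambda}\phi=\phi\dot{\Lambda}v$ collapses $\phi\dot{\Lambda}w$ to $w$. The second is symmetric: $(w\dot{\Lambda}\tilde{v})\dot{\Lambda}v=w\dot{\Lambda}(\tilde{v}\dot{\Lambda}v)=w\dot{\Lambda}\phi=w$, now using the theorem $\tilde{v}\dot{\Lambda}v=v\dot{\Lambda}\tilde{v}$ (hence $=\phi$) and axiom (4).

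For uniqueness, suppose $v\dot{\Lambda}x=w=v\dot{\Lambda}x'$; then $v\dot{\Lambda}x=v\dot{\Lambda}x'$ and the left-cancellation result, Theorem~\ref{theo alg rght}, yields $x=x'$. Dually, if $y\dot{\Lambda}v=w=y'\dot{\Lambda}v$, the first cancellation theorem (cancellation on the right) gives $y=y'$. Together with the two constructions above this establishes both existence and uniqueness.

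The one step that genuinely needs care, and the main obstacle, is the associativity footnote, which forbids regrouping exactly when the middle pair forms an inverse collapse of the excluded type. The first computation is safe for generic $v,w$ (the forbidden case arises only in the fully degenerate situation $v=w=\tilde{v}$), but the regrouping $(w\dot{\Lambda}\tilde{v})\dot{\Lambda}v=w\dot{\Lambda}(\tilde{v}\dot{\Lambda}v)$ lands in the excluded configuration precisely when $w=\tilde{v}$. I expect to handle this by invoking commutativity (axiom 6): since $y\dot{\Lambda}v=v\dot{\Lambda}y$, the equation $y\dot{\Lambda}v=w$ is really the same as the first equation, so its unique solution must coincide with $x=\tilde{v}\dot{\Lambda}w$ and the delicate regrouping can be bypassed altogether. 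This reduction is the part of the argument I would write out most carefully.
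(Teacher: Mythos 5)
The paper gives no proof of this theorem at all, deferring to ``the basic of group theory,'' and your argument is exactly that standard one --- explicit solutions $x=\tilde{v}\dot{\Lambda}w$ and $y=w\dot{\Lambda}\tilde{v}$ (reading the $B$ in the statement as $w$), with uniqueness from the two cancellation theorems --- so it matches the paper's intended approach. Your additional care about the associativity footnote, and the use of commutativity to reduce the second equation to the first when $w=\tilde{v}$, is sound and goes beyond anything the paper supplies.
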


\begin{theorem}
 (G, $\dashv \vdash$) obey group theorems. 
\end{theorem}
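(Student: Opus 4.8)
The plan is to verify directly that the single operation $\dashv\vdash$ on $G$ satisfies the four defining properties of a group, since each of these is already furnished by the axioms of a Link Algebra. First I would recall that a group is a set carrying one binary operation that is closed, associative, admits a two-sided identity, and in which every element has a two-sided inverse. The whole strategy is then to read these off from the axiom list, restricted throughout to the operation $\dashv\vdash$ and ignoring the companion equations for $\dot{\Lambda}$.

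Concretely, closure is the first equation of axiom (1); associativity is the first equation of axiom (3); the existence of a two-sided identity $\phi$ is the first equation of axiom (4), which commutativity (axiom (6)) upgrades to a genuine two-sided identity; and the existence of a two-sided inverse $\tilde v$ is the first equation of axiom (5). Having matched these four requirements, I would conclude that $(G,\dashv\vdash)$ is a group, so that every theorem of group theory applies verbatim. Axiom (6) further shows the group is abelian. At this point the preceding results of the section, namely the cancellation laws and the uniqueness of the identity and of inverses, are recovered as immediate instances of standard group facts, which is presumably the sense in which the structure is said to \emph{obey group theorems}.

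The main obstacle is the associativity axiom, which here carries the exception flagged in its footnote (the case $v=w$, $t=\tilde v$). Strictly speaking this means $\dashv\vdash$ is not associative on all of $G$, so the delicate step is to argue that this exceptional triple either never enters the reductions needed for the group-theoretic consequences, or can be set aside without disturbing the structure, for instance by passing to the subset on which associativity holds without restriction. I expect this to be the only genuinely non-routine point; once associativity is secured, the remaining checks against axioms (1), (4), and (5) are a direct transcription. I would also note the tension between idempotency (axiom (2)) and the group structure, since in any group $v\dashv\vdash v=v$ forces $v=\phi$, and I would address this either by reading idempotency as vacuous on the relevant subset or by interpreting it within the intended graph-union semantics rather than as a literal group identity.
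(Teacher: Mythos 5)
The paper offers no proof of this theorem at all --- it is one of the results the author waves toward ``the basic of group theory'' in Kaufman and Percigout --- so your strategy of reading the four group axioms directly off axioms (1), (3), (4), (5) is evidently the intended argument, and as a transcription exercise it is carried out correctly.

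However, the two obstacles you flag at the end are not side remarks to be ``addressed'': they are fatal to the literal statement, and you should not paper over them. Your observation about idempotency is exactly right and can be sharpened into a contradiction: for any $v$, axioms (2), (3), (4), (5) give $\phi = \tilde v \dashv \vdash v = \tilde v \dashv \vdash (v \dashv \vdash v) = (\tilde v \dashv \vdash v) \dashv \vdash v = \phi \dashv \vdash v = v$, and the footnoted exception to associativity does not exclude this triple (it requires both $v=w$ \emph{and} $t=\tilde v$, whereas here the first factor is $\tilde v$ and the middle one is $v$). So if $(G,\dashv\vdash)$ were a group it would be the trivial group $\{\phi\}$; the theorem as stated is false for any $G$ with more than one element, or at best requires a reinterpretation (dropping idempotency, or quotienting, or restricting to a sub-structure) that neither you nor the paper supplies. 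Your proposed escape routes --- declaring idempotency ``vacuous on the relevant subset'' or appealing to the graph-union semantics --- are not proofs and would each need to be developed into an actual construction. In short: your reduction to the axioms matches the paper's (absent) intent, but the gap you yourself identified is real, is the whole content of the problem, and remains open in your write-up.
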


\subsection{Objects in Link Algebra} In Link algebra, same as graph theory, there are fundamental objects like Paths, Cycles, Stars and Complete Conective forms. In this section we will present the definition of those objects.

\begin{definition}\label{def path}
$\forall$ $v_1, \dots, v_n \in$ G, an equation is called \textbf{Path} if
$$ P_n = v_1 \dot{\Lambda} v_2 \dashv \vdash \dots \dashv \vdash v_{n-1} \dot{\Lambda} v_n$$
\end{definition}

\begin{definition} 
$\forall$ $v_1, \dots, v_n \in$ G, an equation is called \textbf{Cycle} if
$$C_n = P_n \dashv \vdash v_1 \dot{\Lambda} v_n$$
\end{definition}

\begin{definition}\label{def star}
$\forall$ $v_1, \dots, v_n$ and w $\in$ G, an equation is called \textbf{Star} if
$$ S_n = w \dot{\Lambda} (v_1 \dashv \vdash \dots \dashv \vdash v_n)$$
\end{definition}

\begin{definition}
$\forall$ $v_1, \dots, v_n \in$ G, an equation is called \textbf{Complete Conective Form} if
$$K_n = v_1 \dot{\Lambda} \dots \dot{\Lambda} v_n.$$

\end{definition}

\begin{definition}
$\forall$ v and w $\in$ G, an Edge is definded as
$$\bar{\varepsilon} = v \dot{\Lambda} w.$$
\end{definition}

\begin{definition}
$\forall$ $v_1, \dots, v_n \in$ G, an equation is called Null Form if
$$N_n = v_1 \dashv \vdash \dots \dashv \vdash v_n.$$
\end{definition}

\subsection{Aplication on Graph Theory} As we said in the introduction, we will show how Link Algebra is useful on Graph Theory. Therefore, we will give some main theorems derived from the figures in the earlier section. Some will be demonstrated, the rest are easy to proof from the other theorems.

\begin{theorem}
if $V=v_1 \dashv \vdash \dots \dashv \vdash v_n$, then $K_n = V \dot{\Lambda} V.$
\end{theorem}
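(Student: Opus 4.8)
The plan is to expand $V \dot{\Lambda} V$ by the distributive axiom (7) and then collapse the resulting terms using idempotency (2) and commutativity (6). Writing $V = v_1 \dashv \vdash \cdots \dashv \vdash v_n$, I would first apply part (b) of axiom (7) to distribute the left-hand copy of $V$ across the union, and then part (a) to distribute each $v_i$ across the right-hand copy, obtaining the double union of all links $v_i \dot{\Lambda} v_j$ with $1 \le i,j \le n$. Since the axioms only supply the binary forms of associativity and distributivity, this step really requires a short induction on $n$ to justify the $n$-ary expansion; associativity (3) guarantees that the iterated $\dashv \vdash$ is unambiguous, so the collection of terms $v_i \dot{\Lambda} v_j$ is well defined independently of the order of expansion.

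Next I would separate the diagonal terms from the off-diagonal ones. For $i = j$, idempotency (2) gives $v_i \dot{\Lambda} v_i = v_i$, so the diagonal contributes exactly the vertex union $v_1 \dashv \vdash \cdots \dashv \vdash v_n = V$. For $i \neq j$, commutativity (6) identifies $v_i \dot{\Lambda} v_j$ with $v_j \dot{\Lambda} v_i$, and idempotency of $\dashv \vdash$ collapses these coincident terms, so each unordered link appears once. Hence $V \dot{\Lambda} V$ reduces to the union of all vertices together with every distinct edge $v_i \dot{\Lambda} v_j$, $i < j$, which is precisely the all-vertices-and-all-edges description of the complete form.

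Finally I would identify this reduced expression with $K_n = v_1 \dot{\Lambda} \cdots \dot{\Lambda} v_n$. The cleanest route is a parallel induction: using the Conectivity axiom (8) one shows that $v_1 \dot{\Lambda} \cdots \dot{\Lambda} v_n$ expands to the same union of the vertex form $N_n$ with all links $v_i \dot{\Lambda} v_j$, because each new factor $v_k$ contributes, via axiom (8), the full connection between $v_k$ and the vertices already present, which by distributivity is exactly the union of the links $v_j \dot{\Lambda} v_k$ over $j < k$. Comparing the two expansions term by term then yields $K_n = V \dot{\Lambda} V$.

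I expect the main obstacle to be the bookkeeping in this last identification step rather than any single hard idea. The subtlety is that both $V \dot{\Lambda} V$ and the chain $v_1 \dot{\Lambda} \cdots \dot{\Lambda} v_n$ must be normalized to the same canonical ``vertices $\dashv \vdash$ edges'' form before they can be compared, and this normalization leans on repeated, careful use of associativity, commutativity, idempotency, and the Conectivity axiom; in particular one must check that the diagonal self-links absorbed into the vertex set on the left exactly account for the isolated vertices already carried inside each edge on the right, so that no vertex or edge is over- or under-counted.
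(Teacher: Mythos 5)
Your proposal is correct and matches the paper's argument in all essentials: the paper likewise expands $K_n$ by the Conectivity axiom (8) into the union of all vertices and all pairwise links, and reassembles $V \dot{\Lambda} V$ from that form using distributivity and the two idempotency laws. The only difference is one of direction and bookkeeping --- the paper runs a single chain from $K_n$ to $V \dot{\Lambda} V$, writing the off-diagonal links as a union of stars $S$ and using $S = S \dashv \vdash S$ to \emph{double} them before regrouping into $v_i \dot{\Lambda} (v_1 \dashv \vdash \dots \dashv \vdash v_n)$, whereas you normalize both sides to a common ``vertices $\dashv \vdash$ edges'' form and \emph{collapse} the duplicate links instead of creating them; these are mirror images of the same idempotency step.
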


\begin{proof}
 Using the definition of $K_n$ and using recursively the axiom (8)

\begin{tabular}{llll}
& $K_n$ & $= v_1 \dot{\Lambda} \dots \dot{\Lambda} v_n$ & {}\\
& {} & $= v_1 \dashv \vdash \dots \dashv \vdash v_n \dashv \vdash v_{n-1} \dot{\Lambda} v_n \dashv \vdash \dots \dashv \vdash v_1 \dot{\Lambda} (v_2 \dashv \vdash \dots \dashv \vdash v_n)$  \\
& {} & $=v_1 \dot{\Lambda} v_1 \dashv \vdash \dots \dashv \vdash v_n \dot{\Lambda} v_n \dashv \vdash S_1 \dashv \vdash \dot \dashv \vdash S_n$  \\
& {} & $=v_1 \dot{\Lambda} v_1 \dashv \vdash \dots \dashv \vdash v_n \dot{\Lambda} v_n \dashv \vdash S$  \\
& {} & $=v_1 \dot{\Lambda} v_1 \dashv \vdash \dots \dashv \vdash v_n \dot{\Lambda} v_n \dashv \vdash S \dashv \vdash S$  \\
& {} & $=v_1 \dot{\Lambda} (v_1 \dashv \vdash \dots \dashv \vdash v_n) \dashv \vdash \dots \dashv \vdash v_n \dot{\Lambda} (v_1 \dashv \vdash \dots \dashv \vdash v_n \dot{\Lambda} v_n)$ \\
& {} & $=V \dot{\Lambda} V$  \\
\end{tabular}
\\
Where $S=S_1 \dashv \vdash \dots \dashv \vdash S_n$ And the proof is done.
\end{proof}

\begin{theorem}\label{star v theo}
 if $V= v_1 \dashv \vdash \dots \dashv \vdash v_n$, then $S_n \dashv \vdash V = S_n$
\end{theorem}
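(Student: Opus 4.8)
The plan is to exploit the fact that a star already contains, as isolated–vertex information, the whole null form $V$, so that re-joining $V$ is absorbed by idempotency. Concretely, I would write $S_n = w \dot{\Lambda} V$ with $V = v_1 \dashv \vdash \dots \dashv \vdash v_n$ and treat both $w$ and $V$ as null forms (neither carries an edge term). Applying the connectivity axiom (8) to $\Gamma = w$ and $\Gamma' = V$ — whose vertex equations are $N_1 = w$ and $N'_n = V$ themselves, since the edge parts are empty ($q = 0$) — yields the self-absorption identity
$$S_n = w \dot{\Lambda} V = w \dashv \vdash V \dashv \vdash (w \dot{\Lambda} V) = w \dashv \vdash V \dashv \vdash S_n.$$

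From here the theorem is one join away. First I would record the auxiliary idempotency $V \dashv \vdash V = V$, which follows by commutativity (6) and associativity (3) to pair identical vertices and then idempotency (2) applied termwise. Joining $V$ to both sides of the absorption identity and regrouping then gives
$$S_n \dashv \vdash V = w \dashv \vdash V \dashv \vdash S_n \dashv \vdash V = w \dashv \vdash (V \dashv \vdash V) \dashv \vdash S_n = w \dashv \vdash V \dashv \vdash S_n = S_n,$$
where the last equality re-uses the absorption identity. This closes the argument.

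The step I expect to be delicate is the application of the connectivity axiom to null forms. The axiom is phrased for a general equation $\Gamma = V \dashv \vdash E$, and one must check the degenerate case in which the edge part $E$ is empty: then the prescribed vertex equation $N_n$ reduces to $V$ itself, so $\Gamma \dot{\Lambda} \Gamma'$ collapses to $\Gamma \dashv \vdash \Gamma' \dashv \vdash (N_n \dot{\Lambda} N'_m)$ with $N_n = \Gamma$ and $N'_m = \Gamma'$, which is exactly the self-referential absorption used above. A secondary point is to confirm that the associativity exception in the footnote to axiom (3) is never triggered: every term appearing is a positive vertex or the star itself, and no element is ever regrouped against its own inverse $\tilde{v}$, so associativity applies freely throughout.

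As an alternative route, should the direct appeal to axiom (8) on a null form be judged too implicit, I would instead expand the star by distributivity (7a) as $S_n = (w \dot{\Lambda} v_1) \dashv \vdash \dots \dashv \vdash (w \dot{\Lambda} v_n)$, apply the connectivity axiom to each single-vertex pair to obtain the edge-absorption $w \dot{\Lambda} v_i \dashv \vdash v_i = w \dot{\Lambda} v_i$, and then absorb the vertices of $V$ one at a time after using commutativity and associativity to align each $v_i$ with its edge $w \dot{\Lambda} v_i$. This is longer and more bookkeeping-heavy, but it avoids the empty-edge degenerate case entirely.
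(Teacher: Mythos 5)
Your argument is correct in substance, but it takes a genuinely different route from the paper. The paper's proof never touches the connectivity axiom: it pads the stray $V$ with the identity element, rewriting $S_n \dashv \vdash V = (w \dot{\Lambda} V) \dashv \vdash (\phi \dot{\Lambda} V)$ via axiom (4), and then factors $V$ out by distributivity (7) to get $(w \dashv \vdash \phi) \dot{\Lambda} V = w \dot{\Lambda} V = S_n$ --- three lines, using only (4), (6) and (7). (As printed the paper factors on the wrong side, writing $w \dot{\Lambda}(V \dashv \vdash \phi)$ where $(w \dashv \vdash \phi)\dot{\Lambda} V$ is what distributivity actually yields, but the intended mechanism is clear.) You instead extract from axiom (8) the absorption identity $S_n = w \dashv \vdash V \dashv \vdash S_n$ and finish with $V \dashv \vdash V = V$. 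What your route buys is generality and transparency: it isolates the structural fact that a conective form absorbs its own vertex equation, so the same two lines prove $\Gamma \dashv \vdash N_n = \Gamma$ for \emph{any} conective form, not just stars. What it costs is the reliance on the degenerate, edge-free reading of axiom (8), where $q=0$ forces $N_n = \Gamma$ and the identity becomes self-referential; you flag this correctly as the delicate point, and the paper's own proof of Theorem 4.1 does apply axiom (8) with a single-vertex factor, so there is precedent, but the paper's route for this particular theorem deliberately avoids that question altogether. Your fallback via $S_n = (w \dot{\Lambda} v_1) \dashv \vdash \dots \dashv \vdash (w \dot{\Lambda} v_n)$ does not actually escape the issue, since the edge-absorption $w \dot{\Lambda} v_i \dashv \vdash v_i = w \dot{\Lambda} v_i$ still rests on the same degenerate instance of axiom (8).
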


\begin{proof}
 using the definition of star

\begin{tabular}{llll}
& $S_n \dashv \vdash V$ & $= w \dot{\Lambda} (v_1 \dashv \vdash \dots \dashv \vdash v_n) \dashv \vdash V$ & {} \\
& {} & $=w \dot{\Lambda} V \dashv \vdash V$ & (using the hipotesis) \\
& {} & $=w \dot{\Lambda} V \dashv \vdash V \dot{\Lambda} \phi$ & (using (4)) \\
& {} & $=w \dot{\Lambda} (V \dashv \vdash \phi)$ & (using (7)) \\
& {} & $=S_n$ & (using (4) and the def \ref{def star})
\end{tabular}
\\
which is, as a matter of fact, a Star.
\end{proof}

\begin{theorem}
 $P_n$ is inductive.
\end{theorem}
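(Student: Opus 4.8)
The plan is to read the statement as asserting that the Path equation admits an inductive construction, namely that for $n \geq 3$ one has
$$P_n = P_{n-1} \dashv \vdash (v_{n-1} \dot{\Lambda} v_n),$$
with base case $P_2 = v_1 \dot{\Lambda} v_2$, a single Edge. Establishing this recurrence is what justifies calling $P_n$ ``inductive'', since it exhibits the length-$n$ path as the length-$(n-1)$ path with one further edge appended, and every longer path is then obtained from shorter ones by a single operation.

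First I would record the base case directly from Definition \ref{def path}: for $n=2$ the defining expression collapses to $P_2 = v_1 \dot{\Lambda} v_2$, which is exactly one Edge and requires no further argument, giving the anchor of the induction.

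For the inductive step, I would take the defining expression
$$P_n = v_1 \dot{\Lambda} v_2 \dashv \vdash \dots \dashv \vdash v_{n-2} \dot{\Lambda} v_{n-1} \dashv \vdash v_{n-1} \dot{\Lambda} v_n$$
and apply associativity of $\dashv \vdash$ (axiom (3)) to regroup all summands except the last. The terms $v_1 \dot{\Lambda} v_2 \dashv \vdash \dots \dashv \vdash v_{n-2} \dot{\Lambda} v_{n-1}$ are, by Definition \ref{def path} applied to the vertices $v_1, \dots, v_{n-1}$, precisely $P_{n-1}$; grouping them together yields $P_n = P_{n-1} \dashv \vdash (v_{n-1} \dot{\Lambda} v_n)$, which closes the induction.

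The main obstacle I anticipate is twofold. First, the associativity axiom carries the exception for the case $v = w$ with $t = \tilde{v}$, so I would need to check that regrouping consecutive edge terms never lands in that excluded configuration; since the summands here are edges $v_i \dot{\Lambda} v_{i+1}$ rather than a vertex paired with its own inverse, the exception should not be triggered, but this deserves an explicit remark rather than being silently assumed. Second, and more fundamentally, the term ``inductive'' is never defined in the text, so the cleanest route is to promote the recurrence above to the actual content of the claim and then verify it; once that reading is fixed, the verification reduces to a bookkeeping application of axiom (3), and the only genuinely delicate point is the careful handling of the associativity exception noted above.
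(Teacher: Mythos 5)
Your proposal is correct and matches the paper's own argument: the paper likewise expands the path by Definition \ref{def path}, recognizes the initial block of summands as the shorter path, and concludes $P_{n+1} = P_n \dashv \vdash v_n \dot{\Lambda} v_{n+1}$, which is your recurrence up to an index shift. Your added attention to the base case ($P_2$ rather than the paper's degenerate $n=1$) and to the exception in the associativity axiom is more careful than the original but does not change the route.
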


\begin{proof}
Using Peano Axioms, the case n=1 is trivial. Now, we need to prove that $P_n$ implies $P_{n+1}$.

\begin{tabular}{llll}
& $P_{n+1}$ & $=v_1 \dot{\Lambda} v_2 \dashv \vdash \dots \dashv \vdash v_{n-1} \dot{\Lambda} v_n \dashv \vdash v_n \dot{\Lambda} v_{n+1}$ & (using def \ref{def path}) \\
& {} & $=P_n \dashv \vdash v_n \dot{\Lambda} v_{n+1}$ & (using def \ref{def path} again) \\
\end{tabular}
\\
so, $P_n$ is inductive.
\end{proof}

\begin{corollary}\label{cor cy}
 $C_n$ is inductive.
\end{corollary}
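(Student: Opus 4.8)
The plan is to mirror the inductive argument just given for $P_n$, treating the corollary as an immediate consequence of the preceding theorem together with the definition $C_n = P_n \dashv \vdash v_1 \dot{\Lambda} v_n$. Since the cycle is nothing more than the path $P_n$ linked with the single closing edge $v_1 \dot{\Lambda} v_n$, and the path has already been shown to be inductive, the cycle should inherit the inductive structure.

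First I would dispose of the base case, where $n=1$ is trivial exactly as for $P_n$. For the inductive step, I would assume $C_n$ is a well-formed cycle and expand $C_{n+1}$ using the definition, writing $C_{n+1} = P_{n+1} \dashv \vdash v_1 \dot{\Lambda} v_{n+1}$. Substituting the inductive expansion of the path, $P_{n+1} = P_n \dashv \vdash v_n \dot{\Lambda} v_{n+1}$, yields
$$C_{n+1} = P_n \dashv \vdash v_n \dot{\Lambda} v_{n+1} \dashv \vdash v_1 \dot{\Lambda} v_{n+1}.$$

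To exhibit a genuine recursion in terms of $C_n$ rather than $P_n$, I would reintroduce the old closing edge $v_1 \dot{\Lambda} v_n$ and cancel it against its inverse using axiom (5), so that $P_n = C_n \dashv \vdash \widetilde{v_1 \dot{\Lambda} v_n}$. This gives
$$C_{n+1} = C_n \dashv \vdash v_n \dot{\Lambda} v_{n+1} \dashv \vdash v_1 \dot{\Lambda} v_{n+1} \dashv \vdash \widetilde{v_1 \dot{\Lambda} v_n},$$
expressing $C_{n+1}$ as $C_n$ with the two new edges adjoined and the former closing edge removed.

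The main obstacle I anticipate is precisely this bookkeeping of the closing edge: unlike the path, where each step simply appends one new link, the cycle must open its old closing edge $v_1 \dot{\Lambda} v_n$ and re-close with $v_1 \dot{\Lambda} v_{n+1}$. This is where axiom (5) and the associativity and commutativity axioms do the real work, and I would have to be careful that the cancellation $v_1 \dot{\Lambda} v_n \dashv \vdash \widetilde{v_1 \dot{\Lambda} v_n} = \phi$ is legitimate, which it is outside the degenerate case excluded by the associativity footnote. If one is content with the weaker reading that ``inductive'' means only that the object is generated from $P_n$ by one further link, then the corollary follows immediately from the preceding theorem and no cancellation is needed.
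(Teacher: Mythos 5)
Your argument is correct and follows essentially the same route as the paper: both expand $C_{n+1}$ through $P_{n+1} = P_n \dashv \vdash v_n \dot{\Lambda} v_{n+1}$ and then recover $C_n$ by introducing the old closing edge $v_1 \dot{\Lambda} v_n$ together with its inverse, arriving at the identical expression $C_{n+1} = C_n \dashv \vdash (v_n \dot{\Lambda} v_{n+1} \dashv \vdash v_1 \dot{\Lambda} v_{n+1}) \dashv \vdash \widetilde{v_1 \dot{\Lambda} v_n}$. The only cosmetic difference is that you first solve $P_n = C_n \dashv \vdash \widetilde{v_1 \dot{\Lambda} v_n}$ and substitute, whereas the paper inserts $v_1 \dot{\Lambda} v_n \dashv \vdash \widetilde{v_1 \dot{\Lambda} v_n} = \phi$ directly and regroups.
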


\begin{proof}
 the case n=1, as in the theorem, is trivial. We need to prove that $C_n$ implies $C_{n+1}$.

\begin{tabular}{llll}
& $C_{n+1}$ & $= P_{n+1} \dashv \vdash v_1 \dot{\Lambda} v_{n+1}$\\
& {} & $= P_n \dashv \vdash v_n \dot{\Lambda} v_{n+1} \dashv \vdash v_1 \dot{\Lambda} v_{n+1}$\\
& {} & $= P_n \dashv \vdash R \dashv \vdash v_1 \dot{\Lambda} v_n \dashv \vdash \widetilde{v_1 \dot{\Lambda} v_n}$ \\
& {} & $= P_n \dashv \vdash v_1 \dot{\Lambda} v_n \dashv \vdash R \dashv \vdash \widetilde{v_1 \dot{\Lambda} v_n}$ \\
& {} & $=C_n \dashv \vdash R \dashv \vdash \widetilde{v_1 \dot{\Lambda} v_n},$ \\
\end{tabular}
\\
calling $R=v_n \dot{\Lambda} v_{n+1} \dashv \vdash v_1 \dot{\Lambda} v_{n+1}$, $C_n$ is inductive.
\end{proof}

\begin{corollary}
$S_n$ is inductive.
\end{corollary}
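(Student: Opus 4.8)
The plan is to proceed by induction on $n$ via the Peano Axioms, mirroring the structure of the preceding proofs for $P_n$ and $C_n$. First I would dispatch the base case $n=1$, which is immediate: $S_1 = w \dot{\Lambda} v_1$ is trivially a Star by Definition \ref{def star}. The substance lies in the inductive step, where I must show that $S_n$ implies $S_{n+1}$.

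For the inductive step, I would start from the definition of $S_{n+1}$ and peel off the last vertex using the distributivity axiom (7a). Concretely,
$$S_{n+1} = w \dot{\Lambda} (v_1 \dashv \vdash \dots \dashv \vdash v_n \dashv \vdash v_{n+1}),$$
and applying axiom (7a) to separate $v_{n+1}$ from the rest of the union yields
$$S_{n+1} = w \dot{\Lambda} (v_1 \dashv \vdash \dots \dashv \vdash v_n) \dashv \vdash w \dot{\Lambda} v_{n+1} = S_n \dashv \vdash w \dot{\Lambda} v_{n+1}.$$
This exhibits $S_{n+1}$ as $S_n$ joined via $\dashv \vdash$ with the single new edge $w \dot{\Lambda} v_{n+1}$, which is exactly the inductive form we want.

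Unlike the proof of Corollary \ref{cor cy} for $C_n$, which required introducing inverse elements and reshuffling terms to recover the earlier cycle, here the decomposition falls out directly from a single application of distributivity, since the Star already carries one center vertex $w$ that distributes over the vertex union. For this reason I expect no genuine obstacle: the only point deserving care is confirming that the application of axiom (7a) legitimately isolates the final term $v_{n+1}$. This is justified because $\dashv \vdash$ is associative (axiom 3), so the union $v_1 \dashv \vdash \dots \dashv \vdash v_{n+1}$ may first be regrouped as $(v_1 \dashv \vdash \dots \dashv \vdash v_n) \dashv \vdash v_{n+1}$ before distributing, after which the inductive hypothesis identifies the first summand as $S_n$.
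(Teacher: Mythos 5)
Your proof is correct, but it takes a genuinely different route from the paper. The paper's own proof of this corollary is a one-line deferral --- ``Same as corollary \ref{cor cy}'' --- which points back to the cycle argument, where $C_{n+1}$ is rewritten in terms of $C_n$ by artificially inserting the pair $v_1 \dot{\Lambda} v_n \dashv \vdash \widetilde{v_1 \dot{\Lambda} v_n}$ and collecting a leftover term $R$. You instead exploit the structure specific to stars: since $S_{n+1} = w \dot{\Lambda} (v_1 \dashv \vdash \dots \dashv \vdash v_{n+1})$ has a single center $w$ applied to a $\dashv \vdash$-union, associativity (axiom 3) lets you regroup the union as $(v_1 \dashv \vdash \dots \dashv \vdash v_n) \dashv \vdash v_{n+1}$ and distributivity (axiom 7a) then splits off $w \dot{\Lambda} v_{n+1}$ directly, giving $S_{n+1} = S_n \dashv \vdash w \dot{\Lambda} v_{n+1}$ with no inverse elements and no residual term. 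This is cleaner and arguably more faithful to what ``inductive'' should mean here (each step adds exactly one new edge), whereas the paper's deferral buys brevity at the cost of leaving the reader to guess how the cycle manipulation transfers to stars --- which it does not transfer to verbatim, since a star has no analogue of the closing edge $v_1 \dot{\Lambda} v_n$. Your version is the one I would keep; the only cosmetic point is that your base case $S_1 = w \dot{\Lambda} v_1$ quietly involves two vertices ($w$ and $v_1$), matching the paper's convention in Definition \ref{def star} where $n$ counts only the leaves.
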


\begin{proof}
 Same as corollary \ref{cor cy} .
\end{proof}

\begin{theorem}
if $S_n= S_{n-1} \dashv \vdash \bar{\varepsilon}$ and $S'_m=S'_{m-1} \dashv \vdash \bar{\varepsilon}$, then 
$$S_n \dashv \vdash S'_m \dashv \vdash \tilde{\bar{\varepsilon}}=S_{n-1} \dashv \vdash S'_{m-1}.$$
\end{theorem}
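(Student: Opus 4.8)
The plan is to substitute the two hypotheses directly into the left-hand side and then collapse the redundant copies of the edge $\bar{\varepsilon}$ using the group-like axioms for $\dashv \vdash$. First I would replace $S_n$ by $S_{n-1} \dashv \vdash \bar{\varepsilon}$ and $S'_m$ by $S'_{m-1} \dashv \vdash \bar{\varepsilon}$, obtaining
$$S_n \dashv \vdash S'_m \dashv \vdash \tilde{\bar{\varepsilon}} = (S_{n-1} \dashv \vdash \bar{\varepsilon}) \dashv \vdash (S'_{m-1} \dashv \vdash \bar{\varepsilon}) \dashv \vdash \tilde{\bar{\varepsilon}}.$$
Using commutativity (axiom (6)) and associativity (axiom (3)) I would gather the two occurrences of $\bar{\varepsilon}$ next to each other and push the star remainders to the front, rewriting the right-hand side as $S_{n-1} \dashv \vdash S'_{m-1} \dashv \vdash (\bar{\varepsilon} \dashv \vdash \bar{\varepsilon}) \dashv \vdash \tilde{\bar{\varepsilon}}$.

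The finishing steps are then mechanical: idempotency (axiom (2)) gives $\bar{\varepsilon} \dashv \vdash \bar{\varepsilon} = \bar{\varepsilon}$, the inverse axiom (axiom (5)) gives $\bar{\varepsilon} \dashv \vdash \tilde{\bar{\varepsilon}} = \phi$, and finally the identity axiom (axiom (4)) removes the $\phi$, leaving $S_{n-1} \dashv \vdash S'_{m-1}$, as claimed.

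The main obstacle, and the only genuinely nontrivial point, is the order in which the three edge terms $\bar{\varepsilon} \dashv \vdash \bar{\varepsilon} \dashv \vdash \tilde{\bar{\varepsilon}}$ are combined, because this is exactly the configuration excluded from associativity by the footnote to axiom (3): taking $v = w = \bar{\varepsilon}$ and $t = \tilde{\bar{\varepsilon}} = \tilde{v}$ falls under the forbidden case $v = w$, $t = \tilde{v}$. Indeed, the left grouping $(\bar{\varepsilon} \dashv \vdash \bar{\varepsilon}) \dashv \vdash \tilde{\bar{\varepsilon}}$ collapses to $\phi$, whereas the right grouping $\bar{\varepsilon} \dashv \vdash (\bar{\varepsilon} \dashv \vdash \tilde{\bar{\varepsilon}})$ collapses to $\bar{\varepsilon}$, so the two associations genuinely disagree and the identity would fail if one chose the wrong one. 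I would therefore be careful to apply idempotency to the adjacent pair $\bar{\varepsilon} \dashv \vdash \bar{\varepsilon}$ \emph{first}, and only afterward invoke the inverse axiom; this is precisely what forces the answer to be $S_{n-1} \dashv \vdash S'_{m-1}$ rather than the spurious $S_{n-1} \dashv \vdash S'_{m-1} \dashv \vdash \bar{\varepsilon}$. Apart from respecting this one associativity restriction, the derivation is a routine reduction inside the group $(G, \dashv \vdash)$ supplied by the final theorem of the previous subsection.
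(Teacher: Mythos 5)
Your proof is correct and follows essentially the same route as the paper's: substitute the hypotheses, regroup with commutativity and associativity to isolate $\bar{\varepsilon} \dashv \vdash \bar{\varepsilon} \dashv \vdash \tilde{\bar{\varepsilon}}$, apply idempotency to the adjacent pair first, then the inverse and identity axioms. Your additional remark about the excluded associativity case $v=w$, $t=\tilde{v}$ --- and the fact that the two groupings of $\bar{\varepsilon} \dashv \vdash \bar{\varepsilon} \dashv \vdash \tilde{\bar{\varepsilon}}$ genuinely disagree, so the order of reduction must be fixed --- is a real subtlety that the paper's own proof silently glosses over, and making it explicit strengthens the argument.
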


\begin{proof}
first we use the definitions

\begin{tabular}{llll}
& $S_n \dashv \vdash S'_m \dashv \vdash \tilde{\bar{\varepsilon}}$ & $=(S_{n-1} \dashv \vdash \bar{\varepsilon}) \dashv \vdash (S'_{m-1} \dashv \vdash \bar{\varepsilon}) \dashv \vdash \tilde{\bar{\varepsilon}}$ \\
& {} & $=(S_{n-1} \dashv \vdash S'_{m-1}) \dashv \vdash (\bar{\varepsilon} \dashv \vdash \bar{\varepsilon}) \dashv \vdash \tilde{\bar{\varepsilon}}$ \\
& {} & $=S_{n-1} \dashv \vdash S_{m-1} \dashv \vdash (\bar{\varepsilon} \dashv \vdash \tilde{\bar{\varepsilon}})$ \\
& {} & $=S_{n-1} \dashv \vdash S_{m-1} \dashv \vdash \phi$ \\
& {} & $=S_{n-1} \dashv \vdash S_{m-1}$ \\
\end{tabular}
\\
then, we only use idempotency on edges and the axiom of inverse element.
\end{proof}

\begin{theorem}
If $V=V' \dashv \vdash V''$ where $V'=v_1 \dashv \vdash \dots \dashv \vdash v_j$, $V''=v_j \dashv \vdash \dots \dashv \vdash v_n$ where $j \in \{1, \dots, n \}$, then $S_n=S'_j \dashv \vdash S''_{n-j+1}$.
\end{theorem}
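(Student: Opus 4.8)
The plan is to work backward from the claimed right-hand side $S'_j \dashv\vdash S''_{n-j+1}$ and collapse it onto $S_n$ using a single application of distributivity. First I would unfold both stars via Definition \ref{def star}: writing $w$ for the common center, we have $S'_j = w \dot{\Lambda} V'$ and $S''_{n-j+1} = w \dot{\Lambda} V''$, so that
$$S'_j \dashv\vdash S''_{n-j+1} = (w \dot{\Lambda} V') \dashv\vdash (w \dot{\Lambda} V'').$$
The crucial step is then to read the first distributivity law (axiom (7a)) from right to left, pulling the common factor $w$ out front:
$$(w \dot{\Lambda} V') \dashv\vdash (w \dot{\Lambda} V'') = w \dot{\Lambda} (V' \dashv\vdash V'').$$
Finally I would substitute the hypothesis $V = V' \dashv\vdash V''$ and invoke Definition \ref{def star} once more to recognize $w \dot{\Lambda} V = S_n$, closing the chain.

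The one point that deserves care — and which I expect to be the main, if modest, obstacle — is that $V'$ ends at $v_j$ while $V''$ begins at $v_j$, so the two blocks overlap in that single vertex. When forming $V' \dashv\vdash V''$ the term $v_j$ appears twice, as $\dots \dashv\vdash v_j \dashv\vdash v_j \dashv\vdash \dots$, and it is precisely idempotency (axiom (2)) that guarantees $v_j \dashv\vdash v_j = v_j$. Thus the doubled vertex silently merges and $V' \dashv\vdash V''$ genuinely equals the null form $V = v_1 \dashv\vdash \dots \dashv\vdash v_n$, rather than an object carrying a repeated $v_j$. I would flag this explicitly, since without idempotency the index $n-j+1$ on the second star would fail to match the splitting of $V$, and the equality would break; this merge is the hinge that makes the statement true as written.

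Altogether the argument is one use of distributivity sandwiched between two applications of the star definition, with idempotency reconciling the overlap at $v_j$. No inverse, associativity, or commutativity machinery is required beyond what distributivity already presupposes, so I would present the whole derivation as a short direct display of equalities rather than an induction or case analysis.
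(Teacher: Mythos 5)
Your proposal is correct and follows essentially the same route as the paper, which simply invokes distributivity (and associativity) together with the hypothesis and declares the result obvious; your chain $S'_j \dashv\vdash S''_{n-j+1} = w\dot{\Lambda}V' \dashv\vdash w\dot{\Lambda}V'' = w\dot{\Lambda}(V'\dashv\vdash V'') = w\dot{\Lambda}V = S_n$ is just that argument written out. Your explicit appeal to idempotency to merge the duplicated $v_j$ is a detail the paper's one-line proof silently omits, and it is a worthwhile addition.
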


\begin{proof}
Using the axiom of distributivity and asociativity, and the hipothesis, the proof is obvious. 
\end{proof}

Before we present the next definition, lets define the following concepts

\begin{definition}
If $f(v_1, \dots, v_n)=\Gamma$ is a conective form where $f:G^n \rightarrow G$, we will call Conective Form Complement to
$$\Gamma^c= K_n \dashv \vdash \tilde{\Gamma}$$
\end{definition}

\begin{definition}\label{def cf}
If $\Gamma$ is a Conective Form, we will call a Star Composed Form to
$$\Gamma_s= N_n \dot{\Lambda} N_n \dashv \vdash \widetilde{\Gamma^c}$$
\end{definition}

\begin{example}
Lets say that we have a conective form 
$$\Gamma = v_1 \dot{\Lambda} v_2 \dot{\Lambda} v_3 \dashv \vdash v_3 \dot{\Lambda} v_4.$$
The complement of $\Gamma$, using the definition will be:

\begin{tabular}{llll}
& $\Gamma^c$ & $= N_4 \dot{\Lambda} N_4 \dashv \vdash \widetilde{(v_1 \dot{\Lambda} v_2 \dot{\Lambda} v_3 \dashv \vdash v_3 \dot{\Lambda} v_4)}$ \\
& {} & $=v_1 \dot{\Lambda} (v_2 \dashv \vdash v_3 \dashv \vdash v_4 \dashv \vdash v_5) \dashv \vdash \dots \dashv \vdash v_4 \dot{\Lambda} v_5 \dashv \vdash \widetilde{v_1 \dot{\Lambda} v_2 \dot{\Lambda} v_3} \dashv \vdash \widetilde{v_3 \dot{\Lambda} v_4}$ \\
& {} & $=v_1 \dot{\Lambda} v_4 \dashv \vdash v_4 \dot{\Lambda} v_2.$ \\
\end{tabular}
\\
Hence, the SCF:

\begin{tabular}{llll}
& $\Gamma_s$ & $= N_4 \dot{\Lambda} N_4 \dashv \vdash \widetilde{(v_1 \dot{\Lambda} v_4 \dashv \vdash v_4 \dot{\Lambda} v_2)}$ \\
& {} & $=(v_1 \dashv \vdash v_2 \dashv \vdash v_3 \dashv \vdash v_4) \dot{\Lambda} (v_1 \dashv \vdash v_2 \dashv \vdash v_3 \dashv \vdash v_4) \dashv \vdash \widetilde{v_1 \dot{\Lambda} v_4} \dashv \vdash \widetilde{v_4 \dot{\Lambda} v_2}$ \\
& {} & $=v_1 \dot{\Lambda} (v_2 \dashv \vdash v_3) \dashv \vdash v_2 \dot{\Lambda} (v_1 \dashv \vdash v_3) \dashv \vdash v_3 \dot{\Lambda} (v_1 \dashv \vdash v_2 \dashv \vdash v_4) \dashv \vdash v_4 \dot{\Lambda} v_3.$ \\
\end{tabular}
\\
\end{example}

\begin{definition}\label{theo card star}
If $V=v_1 \dashv \vdash \dots \dashv \vdash v_n$, with $S_n$ defined as usual, for $f:G \rightarrow G$, $f(S_n) = S'_m$, if
$$card(V) = card(V'),$$
With card, defined as the cardinality of the star.
\end{definition}

\begin{definition}
The graphs $\Gamma$ and $\Gamma'$ will be isomorphic (ordered form maximum to minimum star by cardinality) if there is some $f:G \rightarrow G$, lineal to $\dashv \vdash$ that
$$f(\Gamma_s)= \Gamma'_s.$$
\end{definition}

\begin{example}
Lets supose we have the graphs $\Gamma = v_1 \dot{\Lambda} v_2 \dot{\Lambda} v_3 \dashv \vdash v_3 \dot{\Lambda} v_4$ and $\Gamma'= w_1 \dot{\Lambda} (w_2 \dashv \vdash w_3 \dashv \vdash w_4) \dashv \vdash w_2 \dot{\Lambda} w_4$.
Applying the definition \ref{def cf} in both, we have
$$\Gamma_s= v_1 \dot{\Lambda} (v_2 \dashv \vdash v_3) \dashv \vdash v_2 \dot{\Lambda} (v_1 \dashv \vdash v_3) \dashv \vdash v_3 \dot{\Lambda} (v_1 \dashv \vdash v_2 \dashv \vdash v_4) \dashv \vdash v_4 \dot{\Lambda} v_3$$
$$\Gamma'_s= w_1 \dot{\Lambda} (w_2 \dashv \vdash w_3 \dashv \vdash w_4) \dashv \vdash w_2 \dot{\Lambda} (w_3 \dashv \vdash w_4) \dashv \vdash w_3 \dot{\Lambda} w_1 \dashv \vdash w_4 \dot{\Lambda} (w_1 \dashv \vdash w_2),$$
as we see, ordering by maximum to minimum star and applying a function to $\Gamma$ (lineal to $\dashv \vdash$),the stars have equal cardinality, so, we acctually have $f(\Gamma_s) = \Gamma'_s$.
\end{example}

\begin{theorem}
if $\Gamma^c$ is the complement of $\Gamma$ with n vertex both, then 
$$\Gamma^c \dashv \vdash \Gamma= K_n.$$
\end{theorem}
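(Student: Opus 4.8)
The plan is to unwind the definition of the Conective Form Complement and then collapse the resulting expression with the group-type axioms governing $\dashv \vdash$. By definition $\Gamma^c = K_n \dashv \vdash \tilde{\Gamma}$, so substituting directly into the left-hand side gives
$$\Gamma^c \dashv \vdash \Gamma = (K_n \dashv \vdash \tilde{\Gamma}) \dashv \vdash \Gamma,$$
and the whole task reduces to showing that the antiform $\tilde{\Gamma}$ cancels against $\Gamma$, leaving $K_n$ untouched.

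First I would reassociate the three terms by axiom (3), regrouping the product as $K_n \dashv \vdash (\tilde{\Gamma} \dashv \vdash \Gamma)$. The inverse-element axiom (5) then replaces the inner factor $\tilde{\Gamma} \dashv \vdash \Gamma$ by the identity $\phi$, so the expression becomes $K_n \dashv \vdash \phi$, and a single application of the identity axiom (4) gives $K_n \dashv \vdash \phi = K_n$. This is precisely the elementary group cancellation of an element against its inverse carried out inside the $\dashv \vdash$-group, whose legitimacy is guaranteed by the immediately preceding theorem that $(G, \dashv \vdash)$ obeys the group theorems.

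The only genuinely delicate step, and the one I expect to be the main obstacle, is the reassociation. The associativity axiom carries an explicit exception (its footnote) for exactly the pattern in which an element is associated against its own inverse, namely $(v \dashv \vdash w) \dashv \vdash t$ with $v = w$ and $t = \tilde{v}$. For our triple $(K_n, \tilde{\Gamma}, \Gamma)$ this forbidden configuration would require $K_n = \tilde{\Gamma}$ and $\Gamma = \tilde{K_n}$, which by the double-negation theorem $\widetilde{\tilde{v}} = v$ amount to the single degenerate identity $\Gamma = \tilde{K_n}$. In that exceptional case idempotency (axiom 2) forces $\Gamma^c = K_n \dashv \vdash K_n = K_n$ and hence $\Gamma^c \dashv \vdash \Gamma = K_n \dashv \vdash \tilde{K_n} = \phi$, so the stated conclusion genuinely fails; the theorem therefore tacitly presupposes $\Gamma \neq \tilde{K_n}$, under which the reassociation is fully licensed and the three-line computation goes through without further difficulty. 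I would state this non-degeneracy hypothesis explicitly before beginning the calculation.
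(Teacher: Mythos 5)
Your proof is correct and follows exactly the same route as the paper's own: substitute the definition $\Gamma^c = K_n \dashv \vdash \tilde{\Gamma}$, reassociate by axiom (3), collapse $\tilde{\Gamma} \dashv \vdash \Gamma$ to $\phi$ by axiom (5), and absorb $\phi$ by axiom (4). Your extra discussion of the associativity exception and the degenerate case $\Gamma = \widetilde{K_n}$ is a point of care the paper silently skips, but the core argument is identical.
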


\begin{proof}
Using the defintion of complementary graph

\begin{tabular}{llll}
&  $\Gamma^c \dashv \vdash \Gamma$ & $=(K_n \dashv \vdash \tilde{\Gamma}) \dashv \vdash \Gamma$ \\
& {} & $=K_n \dashv \vdash (\tilde{\Gamma} \dashv \vdash \Gamma)$ \\
& {} & $=K_n \dashv \vdash \phi$ \\
& {} & $=K_n.$ \\
\end{tabular}
\\
the graph and the complement make the complete graph.
\end{proof}

\section{A brief view of applications on multi, pseudo an oriented Graphs}

\subsection{A reminder of the definitions}\label{subsect} In many mathematical bibliography, graphs seems to be confused with other kind of objects like multi graphs, pseudo graphs and oriented graps. The truth is, there are conceptual differences, of which we are going to make a short remind to the reader.
\begin{definition}
A graph $G=(V,E)$ is called \textbf{multigraph} if some edges could be equal; between two nodes, can exist many edges.
\end{definition}

\begin{definition}
A graph is called \textbf{pseudograph} if there are edges with the form $\{a,a\}$, but no multiedges of the canonical form.
\end{definition}

\begin{definition}
A graph with the form $G=(V,E)$ is called \textbf{oriented} if the edges of E have the form $e=(v,w)$, which is an ordered pair of vertexes.
\end{definition}

\subsection{Arithmetical Graphs} Wolfram, in an article referred at Pseudographs \cite{Wolfram}, referred as a graph with loops an multiedges. For some authors, pseudographs and graphs are deeply connected by the idea of multiset. But the problem was that the definition of those graphs only consider multiedges but no multivertexes, by that reason there is no posibility to define precisely the operation of union.

In 2003, Wildberger \cite{Wildberger} presented a work proposing an alternative form to treat multisets by using linear notation. In that work, he propose a third operation on sets: the sum, were elements of two sets A and B just add in even if elements of both repeat. Let's give a simple example. Let's define the multisets A and B as
$$A= \{a,a,b\}$$
$$B=\{b,b,c\}$$
Then
$$A+B=\{a,a,b,b,b,c\}$$

If we add antisets, we will have something very close to the arithmetical structure of $\mathbb{Z}$ with the operation sum, defined as an abelian group. In the work of Carroll, there is an extension of the concept of negative sets to negative numbers that acctually shows how group theory can be constructed using the new extension to ZF axiomatic.

Multisets are linear, this is, for an external operation product in $\mathbb{Z}$

\begin{enumerate}
 \item $(n+m)S=nS+mS$
 \item $(nm)S=n(mS)$
 \item $n(S+S')=nS+nS'$
 \item $1S=S$
\end{enumerate}

If the reader is interested, a deepest analisis of this structure of multisets is avaible on the mencioned work. 

Following this concept closer, we will observe that if we have a graph $G=(V,E)$ the definition of graph sum extend not only to the edges but to the vertexes.Now, if we consider the definition on Wildbergers work, then we will have the folowing definition.

\begin{definition}
Lets say that an \textbf{arithmetical graph} is such graph where V and E are multisets, with V multiset or vertexes and E multiset of the non ordered pairs of vertexes or edges.
\end{definition}

Now, lets continue with the change of our previous algebra.

\begin{definition}
A set $\Pi$ with the operations $\dashv \mid \vdash$ and $\mathring{\Lambda}$ is called \textbf{Arithmetical link}
if is a link algebra, but where axiom (4) is $v \dashv \mid \vdash \phi=v, v \mathring{\Lambda} \phi = \phi$, and both operations are not idempotent, with an external operation (multiplication) on $\mathbb{Z}$ (is linear).
\end{definition}

the theorems remain equal, the only variation of this algebra is the introduction of the concept fo Vertex and Edge Engrosure , which includes the properties of numbers in graphs.

\begin{definition}
we will call the relation $\dot{\prec}$ \textbf{Engrosure}, where $\forall n,m \in \mathbb{Z}$ and v $\in \Pi$ we have $n(v) \dot{\prec} m(v)$ if $n < m$.
\end{definition}

is not too difficult to see that if $(\Pi, \dashv \mid \vdash)$ is a group, $(\Pi, \dot{\prec})$ is an equivalence relation. The reader can easily prove the reflexivity, symmetry and transitivity.

\begin{definition}
we will call \textbf{Inverse vertex} to
$$(-1)v := \tilde{v}.$$
\end{definition}

\begin{definition}
We will call a \textbf{Loop} to
$$v^\sigma := v \mathring{\Lambda} v.$$
\end{definition}

there is a theorem we want to add to this structure, probably it is obvious to demonstrate, but it is worth to be written.

\begin{theorem}
For an edge $e=v \mathring{\Lambda} w$ with $n \in \mathbb{Z}$
$$ne=(nv) \mathring{\Lambda} w = v \mathring{\Lambda} (nw).$$
\end{theorem}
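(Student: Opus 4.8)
The plan is to induct on $n$ over the non-negative integers, transferring the scalar from the edge $e$ onto one of its endpoints by means of the distributivity axiom (7), and then to extend the result to negative $n$ through the inverse-vertex relation $(-1)v=\tilde{v}$ together with the group structure of $(\Pi,\dashv\mid\vdash)$.

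First I would dispose of the base cases. For $n=1$ the claim is immediate, since $1v=v$ and $1w=w$ by the linearity property $1S=S$, so $1e=e=(1v)\mathring{\Lambda}w=v\mathring{\Lambda}(1w)$. For $n=0$ I would first establish $0v=\phi$: linearity gives $0v=(0+0)v=0v\dashv\mid\vdash0v$, and since $(\Pi,\dashv\mid\vdash)$ is a group, cancelling yields $0v=\phi$. The same reasoning gives $0e=\phi$, while the modified axiom (4) together with commutativity (axiom (6)) gives $(0v)\mathring{\Lambda}w=\phi\mathring{\Lambda}w=w\mathring{\Lambda}\phi=\phi$, so all three expressions agree.

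For the inductive step I would write $(n+1)e=ne\dashv\mid\vdash e$ using linearity, substitute the induction hypothesis $ne=(nv)\mathring{\Lambda}w$, and obtain $(nv)\mathring{\Lambda}w\dashv\mid\vdash v\mathring{\Lambda}w$. Reading the right-distributivity axiom (7b) backwards factors out the common endpoint $w$, giving $(nv\dashv\mid\vdash v)\mathring{\Lambda}w$; finally $nv\dashv\mid\vdash v=nv\dashv\mid\vdash1v=(n+1)v$ by linearity, so $(n+1)e=((n+1)v)\mathring{\Lambda}w$. The symmetric computation using left-distributivity (7a) yields $v\mathring{\Lambda}((n+1)w)$, completing the induction for $n\ge0$.

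To reach negative $n$ I would first treat $n=-1$. Using distributivity and the inverse axiom (5), $(\tilde{v}\mathring{\Lambda}w)\dashv\mid\vdash e=(\tilde{v}\dashv\mid\vdash v)\mathring{\Lambda}w=\phi\mathring{\Lambda}w=\phi$, so $\tilde{v}\mathring{\Lambda}w$ is the $\dashv\mid\vdash$-inverse of $e$; by the earlier uniqueness-of-inverse theorem it equals $\tilde{e}=(-1)e$, and symmetrically equals $v\mathring{\Lambda}\tilde{w}$. For a general negative integer $-k$ with $k>0$ I would combine this with the non-negative case, writing $(-k)e=k((-1)e)$ via the linearity property $(nm)S=n(mS)$ and then pushing the scalar $k$ through the edge $\tilde{v}\mathring{\Lambda}w$ by the already-proved result. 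The step I expect to be the main obstacle is precisely this negative case: it hinges on identifying scalar multiplication by $-1$ with the additive inverse and on the annihilation law $\phi\mathring{\Lambda}w=\phi$ surviving the passage through $\mathring{\Lambda}$, which is exactly where the non-idempotent, modified axiom (4) of the arithmetical link departs from the original structure. The non-negative induction, by contrast, is routine once the distributivity reading is set up.
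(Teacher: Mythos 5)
Your proposal is correct and its core mechanism is the same as the paper's: write the scalar multiple $ne$ as an $n$-fold $\dashv\mid\vdash$-sum of copies of $v\mathring{\Lambda}w$ and factor out the common endpoint with the distributivity axiom. The paper does exactly this in one display, writing $n(v\mathring{\Lambda}w)=v\mathring{\Lambda}w\dashv\mid\vdash\dots\dashv\mid\vdash v\mathring{\Lambda}w=v\mathring{\Lambda}(w\dashv\mid\vdash\dots\dashv\mid\vdash w)=v\mathring{\Lambda}(nw)$ (with a typo, $nv$ for $nw$, in the last line) and declaring the other equality analogous. Where you genuinely go beyond the paper is in scope: the theorem is stated for all $n\in\mathbb{Z}$, but the paper's expansion into $n$ literal copies only makes sense for positive $n$, and the cases $n=0$ and $n<0$ are silently omitted. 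Your treatment of $n=0$ via cancellation in the group $(\Pi,\dashv\mid\vdash)$ together with the modified axiom (4) (which gives $\phi\mathring{\Lambda}w=\phi$), and of negative $n$ via the identification $\tilde{v}\mathring{\Lambda}w=\widetilde{v\mathring{\Lambda}w}$ through uniqueness of inverses, fills exactly the gap the paper leaves; your observation that the $n=-1$ case is where the argument actually depends on the arithmetical modifications to the axioms is apt. The only caveat is that you must take for granted that the linearity laws and the inverse-vertex definition $(-1)v=\tilde{v}$ extend from vertices to arbitrary elements such as edges, but the paper makes the same implicit assumption.
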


\begin{proof}
we have

\begin{tabular}{llll}
& $n(v \mathring{\Lambda} w)$ & $= v \mathring{\Lambda} w \dashv \mid \vdash \dots \dashv \mid \vdash v \mathring{\Lambda} w$ \\
& {} & $=v \mathring{\Lambda} (w \dashv \mid \vdash \dots \dashv \mid \vdash w)$ \\
& {} & $=v \mathring{\Lambda} (nv)$ \\ 
\end{tabular}
\\
the other case is analogous.
\end{proof}

\begin{example}
Let's say we have the multigraph 
$$\Pi = 2(v_1 \mathring{\Lambda} v_2) \dashv \mid \vdash 2(v_2 \mathring{\Lambda} v_3) \dashv \mid \vdash v_4 \mathring{\Lambda} (v_1\dashv \mid \vdash v_2 \dashv \mid \vdash v_3).$$
Using the linearity he have:
$$\Pi = 2(v_1 \mathring{\Lambda} v_2 \dashv \mid \vdash v_2 \mathring{\Lambda} v_3) \dashv \mid \vdash v_4 \mathring{\Lambda} (v_1\dashv \mid \vdash v_2 \dashv \mid \vdash v_3),$$
Using the defintion of Path and star:
$$\Pi = 2(P_3) \dashv \mid \vdash S_3.$$
This example is an homage to Bernhard Euler, to his famous solution of the Koenisberg bridge problem presented on his work of 1736 Solutio problematis ad geometriam situs pertinentis.
\end{example}

\subsection{Oriented and mixed graphs} This subsection will be briefest than others, the reason is that the definition introduced is short.
As we said in the subsection \ref{subsect}, an oriented graph is defined as a graph $G=(V,E)$ where E is an ordered pair of vertexes. Therefore, the only thing we will define before to show the definition of oriented link algebra, is the following

\begin{definition}
Let's have (x,y) with Kuratowski's defintion
$$(x,y):=\{\{x\},\{x,y\}\},$$
We will call \textbf{Oriented Twist} to
$$(x,y)^\circlearrowleft=(y,x).$$
\end{definition}

A more precise definition with Set/antiset theory would be
$$(x,y)^\circlearrowleft=(x,y) \cup \{\tilde{\{x\}},\{y\}\},$$
where $\{x\}=A$ and $\tilde{\{x\}}= \tilde{A}$.

Now, lets define An Oriented Link Algebra

\begin{definition}
An \textbf{Oriented Link Algebra} is a set $\varGamma$ with operations $\dashv \vdash$, $\vec{\Lambda}$ and ${}^\circlearrowleft$ (unitary), that satisfies the axioms of a Link Algebra except commutativity for the operation $\vec{\Lambda}$ satisfying instead
\begin{enumerate}
 \item $v \vec{\Lambda} w = w \overleftarrow{\Lambda} v$
 \item $(v \vec{\Lambda} w)^\circlearrowleft=v \overleftarrow{\Lambda} w$
\end{enumerate}

Basically, it works same as a link algebra, with the same theorems and definitions.
\end{definition}

For a mixed graphs the definition is as follow

\begin{definition}
An \textbf{Mixed Link Algebra} is a set $\varGamma$ with the operations $\dashv \vdash$, $\dot{\Lambda}$, $\vec{\Lambda}$, ${}^\circlearrowleft$ where

\begin{enumerate}
 \item ($\varGamma$, $\dashv \vdash$, $\dot{\Lambda}$) is a Link Algebra.
 \item ($\varGamma$, $\dashv \vdash$, $\vec{\Lambda}$, ${}^\circlearrowleft$) is a Oriented Link Algebra.
\end{enumerate}

\end{definition}

To finish the exposition of this concept we will give two last examples.

\begin{example}
Let's have the oriented graph
$$\Gamma = w_1 \vec{\Lambda} v_1 \dashv \vdash v_1 \vec{\Lambda} v_2 \dashv \vdash v_2 \vec{\Lambda} v_3 \dashv \vdash v_1 \overleftarrow{\Lambda} v_4 \dashv \vdash v_4 \overleftarrow{\Lambda} v_3$$
using the concept of antisimetric conmutativity
$$\Gamma = w_1 \vec{\Lambda} v_1 \dashv \vdash v_1 \vec{\Lambda} v_2 \dashv \vdash v_2 \vec{\Lambda} v_3 \dashv \vdash v_4 \vec{\Lambda} v_1 \dashv \vdash v_3 \vec{\Lambda} v_4$$
\end{example}

\begin{example}
Let's have the mixed graph
$$\Gamma = v_1 \vec{\Lambda} v_2 \dashv \vdash v_2 \vec{\Lambda} v_1 \dashv \vdash v_2 \dot{\Lambda} v_3.$$
Where we can identify a circular vinculation and a simple vinculation.
\end{example}

\subsection{As an Epilogue} For almost 200 years, graphs had a minor status in the world of mathmatics, being just the shadow of combinatorics, being only consulted when combinatorial arithmetic fails. George Boole, the father of modern logic, once said that in a near future, mathematic will be not mathmatics of numbers, but of abstract objects beyond numbers, opening the view to a new world where there is no center in the mathematical universe: just objects, operations, lattices and who knows what other concepts to be defined and discover. 

Boole as same as Sassure (father of modern semiotics), observe that Mathematical simbols \textbf{are} if there is a meaning that support them, an essence that allow to work the abstraction as it were concrete, like the Demiurgos of Plato's Timeo, who commiserating of our material universe, decided to printed material copies from the universe of ideas. \textbf{Demiurgos} means \textbf{Artisan} in greek, and that is the essence of the work of a mathematitian: be the artisan of a never ending sculpture, the diachronically undefined but synchronously defined mathematical Rodin's Dante: thinking of the \textit{commedia della ragione}  in the top of the gate of hell. 

Le Graphe est mort, vive le Graphe !

\end{document}